\newcommand{\abs}[1]{|#1|}
\newcommand{\Babs}[1]{\Big|#1\Big|}
\newcommand{\Norm}[2]{\|#1\|_{#2}}
\newcommand{\bNorm}[2]{\big\|#1\big\|_{#2}}
\newcommand{\BNorm}[2]{\Big\|#1\Big\|_{#2}}
\newcommand{\pair}[2]{\langle #1,#2 \rangle}
\newcommand{\Bpair}[2]{\Big\langle #1,#2 \Big\rangle}
\newcommand{\ave}[1]{\langle #1\rangle}
\newcommand{\R}{\mathbb{R}}
\newcommand{\C}{\mathbb{C}}
\newcommand{\Z}{\mathbb{Z}}
\renewcommand{\P}[0]{\mathbb{P}}
\newcommand{\E}[0]{\mathbb{E}}
\newcommand{\D}[0]{\mathbb{D}}
\newcommand{\F}[0]{\mathbb{F}}
\newcommand{\eps}[0]{\varepsilon}
\swapnumbers \numberwithin{equation}{section}
\theoremstyle{plain}
\newtheorem{theorem}[equation]{Theorem}
\newtheorem{proposition}[equation]{Proposition}
\newtheorem{lemma}[equation]{Lemma}
\theoremstyle{definition}
\newtheorem{definition}[equation]{Definition}
\theoremstyle{remark}
\begin{document}

\title{Singular integrals in uniformly convex spaces}

\author{Tuomas Hyt\"onen}
\address{Department of Mathematics and Statistics, P.O.B.~68 (Pietari Kalmin katu 5), FI-00014 University of Helsinki, Finland}
\email{tuomas.hytonen@helsinki.fi}


\thanks{The author was supported by the Academy of Finland through project No. 346314 (Finnish Centre of Excellence in Randomness and Structures ``FiRST'')}
\keywords{Singular integral, uniformly convex space, martingale type}
\subjclass[2020]{Primary: 46E40; Secondary: 42B20, 60G46}


\maketitle

\begin{center}
This paper is dedicated to Professor Per H. Enflo.
\end{center}

\begin{abstract}
We consider the action of finitely truncated singular integral operators on functions taking values in a Banach space. Such operators are bounded for any Banach space, but we show a quantitative improvement over the trivial bound in any space with an equivalent uniformly convex norm. This answers a question asked by Naor and the author, who previously proved the result in the important special case of the finite Hilbert transforms.

The proof, which splits the operator into a cancellative part and two paraproducts, follows the broad outline of similar results for genuinely singular (non-truncated) operators in the narrower class of UMD spaces. Thus we revisit and survey the recent techniques behind such results, but our precise setting, the main theorem, and some aspects of its proof, are new.
 
Curiously, it turns out that the paraproducts admit somewhat better bounds than the full operator. In a large class of spaces other than UMD, they remain bounded even without the finite truncations.
\end{abstract}

\section{{Introduction}}

The theory of singular integrals of Banach space -valued functions is well developed in the setting of so-called UMD (unconditional martingale differences) spaces. In fact, this is known to be the maximal generality in which the boundedness of several basic singular operators, like the Hilbert transform, can be extended \cite{Bourgain:83,Burkholder:83}. 

In view of this, the title of this paper might seem outrageous. After all, the questions of boundedness of singular integrals depend only on the isomorphic structure of the target Banach space, and the class of spaces that admit an equivalent uniformly convex norm is known to be strictly larger than the class of UMD spaces \cite{Bourgain:83,Pisier:exemple}.

However, our point of departure is the observation that there are many singular integral type objects of finite nature, whose boundedness is qualitatively trivial but where one can still hope to beat the trivial estimates by more clever quantitative considerations. A prototype example consists of the finite Hilbert transforms
\begin{equation*}
  Hf(i)=\sum_{\substack{j=1 \\ j\neq i}}^N \frac{f(j)}{i-j},\quad i\in\{1,\ldots,N\}.
\end{equation*}
It is not difficult to check that $\Norm{Hf}{\ell_s^N(\mathcal{X})}\lesssim\log N\cdot\Norm{f}{\ell_s^N(\mathcal{X})}$ for every Banach space $\mathcal{X}$, while the uniform bound $\Norm{Hf}{\ell_s^N(\mathcal{X})}\lesssim\Norm{f}{\ell_s^N(\mathcal{X})}$ is equivalent to the boundedness of the usual Hilbert transform on $L_s(\R;\mathcal{X})$, and hence to the UMD property of $\mathcal{X}$. However, an intermediate behaviour $\Norm{Hf}{\ell_s^N(\mathcal{X})}\lesssim(\log N)^\theta\Norm{f}{\ell_s^N(\mathcal{X})}$ is also possible; this phenomenon was found, and its geometric implications explored, by Naor and the author \cite{HN:dicho}.

Motivated by this, in the present work, our aim is to obtain similar improvements over the trivial bound for a general class of singular integrals of finite type when acting on functions taking values in a space that admits a uniformly convex norm. Actually, we will not be make direct use of uniform convexity as such, but rather resort to the rich theory of its equivalent formulations. Recall that the spaces with an equivalent uniformly convex norm are precisely those with an equivalent uniformly smooth norm, and further the same as the super-reflexive  ones \cite{Enflo72}. They can further be given an equivalent $p$-uniformly smooth or $q$-uniformly convex norm \cite{Pisier:1975}, or, what is most relevant for the present needs, they satisfy related martingale inequalities known as martingale type and cotype \cite{Pisier:1975,Pisier:1986}:

\begin{definition}\label{def:martTypeCotype}
A Banach space $\mathcal{X}$ is said to have {\em martingale type} $p\in(1,2]$, if for some (equivalently, all) $s\in(1,\infty)$ and for every probability space (equivalently, every $\sigma$-finite measure space) $(E,\mathcal{E},\mu)$, every martingale $(f_n)_{n=0}^N$ of arbitrary finite length in $L_s(\mu;\mathcal{X})$ satisfies
\begin{equation*}
  \Norm{f_N}{L_s(\mu;\mathcal{X})}
  \lesssim\BNorm{\Big(\Norm{f_0}{\mathcal{X}}^p+\sum_{n=1}^N\Norm{f_n-f_{n-1}}{\mathcal{X}}^p\Big)^{1/p}}{L_s(\mu)},
\end{equation*}
where the implied constant may depend at most on $\mathcal{X}$, $p$, and $s$.

A Banach space $\mathcal{X}$ is said to have {\em martingale cotype} $q\in[2,\infty)$ if, for the same quantities as above, there holds
\begin{equation*}
  \BNorm{\Big(\Norm{f_0}{\mathcal{X}}^q+\sum_{n=1}^N\Norm{f_n-f_{n-1}}{\mathcal{X}}^q\Big)^{1/q}}{L_s(\mu)}
  \lesssim  \Norm{f_N}{L_s(\mu;\mathcal{X})}.
\end{equation*}
\end{definition}

In \cite[p. 221]{Pisier:1986}, the case $s=p$ is taken as the definition for martingale type $p$, and the case $s=q$ for martingale cotype $q$, but the equivalence with other values of $s$ is observed immediately afterwards.  See also \cite[Chapter 10]{Pisier:book} for more information on these and related conditions. The definition is usually formulated for probability spaces, but the equivalence with any $\sigma$-finite measure space follows easily (approximating arbitrary martingales by ones supported on a set of finite measure, and multiplying the measure by a constant to achieve a probability space). See \cite[Chapter 3]{HNVW1} for a systematic development of martingale theory in the $\sigma$-finite setting, and in particular \cite[Section 3.5.d]{HNVW1} for martingale type and cotype.

Representations of singular integrals with the help of martingales are known in various contexts, and this provides the link to our present aims.
Our main result can be formulated as follows. We refer the reader to Section \ref{sec:setup} for relevant definitions.

\begin{theorem}\label{thm:main}
Let $(E,d,\mu)$ be a doubling metric measure space, and
let $Tf(u)=\int_E K(u,v)f(v)d\mu(v)$ be an integral operator whose kernel $K$ satisfies the Calder\'on--Zygmund standard estimates and the additional finiteness property
\begin{equation*}
  K(u,v)=0\quad\text{unless}\quad r\leq d(u,v)<R.
\end{equation*}
Let $n:=1+\log(R/r)$.
\begin{enumerate}
  \item If $\mathcal{X}$ is any Banach space and $s\in[1,\infty]$, then $\Norm{Tf}{L_s(\mu;\mathcal{X})}\lesssim n\Norm{f}{L_s(\mu;\mathcal{X})}$.
  \item If $s\in(1,\infty)$ and $T$ is bounded on the scalar-valued $L_s(\mu)$ with $\Norm{Tf}{L_s(\mu)}\lesssim \Norm{f}{L_s(\mu)}$, and if $\mathcal{X}$ has an equivalent uniformly convex norm, then for some $\theta\in[0,1)$, we have $\Norm{Tf}{L_s(\mu;\mathcal{X})}\lesssim n^{\theta}\Norm{f}{L_s(\mu;\mathcal{X})}$.
  \item\label{it:theta} If $1<p\leq s\leq q<\infty$ and $\mathcal{X}$ has martingale type $p$ and martingale cotype $q$, then one can take $\theta=1/p-1/q$.
\end{enumerate}
\end{theorem}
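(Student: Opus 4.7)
The plan is to reduce parts (2) and (3) to the same argument. By the results cited in the introduction, any space with an equivalent uniformly convex norm has martingale type $p>1$ and martingale cotype $q<\infty$ for some such $p,q$, so part (2) follows from part (3) with $\theta=1/p-1/q\in[0,1)$. Part (1) I would dispatch at once: the standard kernel bound $|K(u,v)|\lesssim 1/\mu(B(u,d(u,v)))$, the truncation $r\leq d(u,v)<R$, and the doubling condition yield $\int_E |K(u,v)|\ud\mu(v)\lesssim 1+\log(R/r)=n$ and similarly with $u,v$ reversed, so Schur's test delivers the trivial bound on $L_s(\mu;\mathcal{X})$ for every Banach space $\mathcal{X}$ and every $s\in[1,\infty]$.

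For part (3) I would work with a Christ-type dyadic system $\mathcal{D}$ on $(E,d,\mu)$, the associated martingale difference operators $D_k$, and the splitting
\[
  T = T^{\mathrm{cancel}} + \Pi_{T1} + \Pi_{T^*1}^*
\]
obtained after averaging over a random dyadic system adapted to $(E,d,\mu)$, in the spirit of the dyadic representation theorem. Here $T^{\mathrm{cancel}}$ is a sum of cancellative dyadic shifts with the usual off-diagonal decay $\Norm{D_k T^{\mathrm{cancel}} D_\ell}{L_s\to L_s}\lesssim 2^{-\alpha|k-\ell|}$, and the truncation $r\leq d(u,v)<R$ confines the relevant scale indices $k,\ell$ to a set of cardinality $O(n)$.

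For the cancellative part, the chain
\[
  \Norm{T^{\mathrm{cancel}}f}{L_s(\mu;\mathcal{X})}
  \lesssim \BNorm{\Big(\sum_k \Norm{D_k(T^{\mathrm{cancel}}f)}{\mathcal{X}}^p\Big)^{1/p}}{L_s(\mu)}
  \lesssim \BNorm{\Big(\sum_k \Norm{D_k f}{\mathcal{X}}^p\Big)^{1/p}}{L_s(\mu)}
\]
uses martingale type $p$ of $\mathcal{X}$ and a Schur-type bound from the off-diagonal decay; the elementary inequality $\Norm{(a_j)_{j=1}^n}{\ell^p}\leq n^{1/p-1/q}\Norm{(a_j)_{j=1}^n}{\ell^q}$ followed by martingale cotype $q$ then converts the right-hand side to $n^{1/p-1/q}\Norm{f}{L_s(\mu;\mathcal{X})}$, which is the desired bound.

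The main technical obstacle I expect is the paraproduct estimate. The scalar $L_s(\mu)$-boundedness hypothesis yields $T1,T^*1\in\BMO(\mu)$ with norm $O(1)$ uniform in $n$, and one must bound the dyadic paraproduct $\Pi_b:L_s(\mu;\mathcal{X})\to L_s(\mu;\mathcal{X})$ by $\Norm{b}{\BMO(\mu)}$ whenever $\mathcal{X}$ has martingale cotype $q<\infty$. The familiar UMD argument relies on two-sided square function control, but here only the cotype direction is available, so a different strategy is needed---presumably via martingale $H^1$--$\BMO$ duality combined with the cotype inequality on one side and a Carleson embedding on the other. As the abstract foreshadows, this approach should in fact yield a paraproduct bound independent of $n$, so the entire $n^{1/p-1/q}$ loss is contributed by the cancellative part.
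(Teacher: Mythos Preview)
Your overall architecture---Schur test for (1), reduction of (2) to (3) via Pisier, and the splitting into paraproducts plus a cancellative remainder handled with martingale type/cotype---is exactly the route the paper takes. Two points deserve correction, however.

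\textbf{Paraproducts.} Your expectation that $\Pi_b$ is bounded on $L_s(\mu;\mathcal{X})$ \emph{independently of $n$} whenever $\mathcal{X}$ has martingale cotype $q<\infty$ is not what the paper proves, and the $H^1$--$\BMO$ route you sketch does not deliver it. The operator $\Pi_b f=\sum_i (\D_i b)\,\E_i f$ is a sum of $\mathcal{X}$-valued martingale differences, so one invokes martingale \emph{type} $p$ (not cotype) to pass to the $\ell^p$-square function; one then uses H\"older over the $\asymp n$ scales to reach $\ell^2$, where a Carleson/stopping-time argument controls $\sum_i|\D_i b|^2\ave{\Norm{f}{\mathcal{X}}}^2$ by $\Norm{b}{\BMO}^2$. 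The outcome is
\[
  \Norm{\Pi_b f}{L_s(\mu;\mathcal{X})}\lesssim n^{1/p-1/2}\Norm{b}{\BMO}\Norm{f}{L_s(\mu;\mathcal{X})},
\]
and, by duality, $\Norm{\Pi_{b^*}^*}{}\lesssim n^{1/2-1/q}$. The ``large class'' alluded to in the abstract is precisely the martingale type~$2$ (resp.\ cotype~$2$) spaces, for which one of these exponents vanishes---not all superreflexive spaces. Fortunately, since $p\le 2\le q$, both paraproduct bounds are dominated by $n^{1/p-1/q}$, so the theorem survives your misstatement.

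\textbf{Cancellative part.} The chain you wrote requires more than an $L_s\to L_s$ off-diagonal bound $\Norm{\D_k T^{\mathrm{cancel}}\D_\ell}{}\lesssim 2^{-\alpha|k-\ell|}$: to pass from $\bNorm{(\sum_k\Norm{\D_k T^{\mathrm{cancel}}f}{\mathcal{X}}^p)^{1/p}}{L_s}$ to the same expression in $f$, you need \emph{pointwise} domination by averages (so that the vector-valued Doob/Fefferman--Stein inequality applies). The paper obtains exactly this after reorganising under random common ancestors $S\in\mathscr D_{i-m}$: the resulting local operators satisfy $\Norm{\dot A^m_S f}{\mathcal{X}}\lesssim \E_S\Norm{f}{\mathcal{X}}+\E_S^m\Norm{f}{\mathcal{X}}$. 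A further subtlety you have not anticipated is that, after extracting the paraproducts, the remainder also contains ``telescoping'' pieces of the form $(\ave{f}_P-\ave{f}_Q)\pair{T1_P}{\D_Q g}$, which involve the compound differences $\D^{[0,m)}_S=\sum_{k=0}^{m-1}\D_{S}^{k}$ with \emph{overlapping} scales; these contribute an extra factor $m^{\max(1/p',1/q)}$ that must be absorbed by the Dini condition on $\omega$ when summing over $m$.
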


Theorem \ref{thm:main} applies in particular to the case when $E=\{1,\ldots,N\}$, $d$ is the usual distance, $\mu$ is the counting measure, and $T$ is the finite Hilbert transform. Then one can take $r=1$ and $R=N$. This important case of Theorem \ref{thm:main} was obtained in \cite{HN:dicho}, where it is also shown that \eqref{it:theta} is the best possible statement in general. The question, answered by Theorem \ref{thm:main}, of extending this particular case to general Calder\'on--Zygmund operators was also raised in \cite{HN:dicho}. 

The proof of this special case used a transference between the finite Hilbert transform on $\{1,\ldots,N\}$ and truncations of the Hilbert transform on $\R$, as well as Petermichl's dyadic martingale representation of the latter \cite{Pet}. The possibility of working directly with the original operator, making use of the more general representation theorems of singular integrals developed in \cite{Hytonen:A2} and, in the context of abstract spaces, in \cite{NRV}, was observed there as an alternative. The present paper implements this programme in detail. By now, several variants of the dyadic representation theorem of \cite{Hytonen:A2} are available in the literature. The present approach is perhaps closest to the one designed in \cite{GH:2018}.

\section{{Set-up}}\label{sec:setup}

A metric measure space $(E,d,\mu)$ is said to be doubling $\mu$ is.a Borel measure on $E$ and the balls satisfy
\begin{equation*}
  \mu(B(u,2t))\lesssim\mu(B(u,t)).
\end{equation*}
We denote $V(u,t):=\mu(B(u,t))$ and $V(u,v):=V(u,d(u,v))$. It follows from doubling that $V(u,v)\asymp V(v,u)$.

Let $\dot E^2:=\{(u,v)\in E\times E:u\neq v\}$.
Let $\omega:[0,1]\to[0,\infty)$ be continuous, non-decreasing and doubling in the sense that $\omega(2t)\lesssim\omega(t)$.
A function $K:\dot E^2\to\C$ is called an $\omega$-standard kernel if
\begin{equation*}
  \abs{K(u,v)}\lesssim \frac{1}{V(u,v)}
\end{equation*}
for all $(u,v)\in\dot E^2$, and
\begin{equation*}
  \abs{K(u,v)-K(u,w)}+\abs{K(v,u)-K(w,u)}\lesssim\omega\Big(\frac{d(v,w)}{d(u,v)}\Big)\frac{1}{V(u,v)}
\end{equation*}
for all $(u,v)\in \dot E^2$ and $w\in E$ with $d(v,w)\leq\frac12d(u,v)$ (hence also $(u,w)\in\dot E^2$).

Some control of $\omega$ is usually required, and we define
\begin{equation*}
  \Norm{\omega}{\operatorname{Dini}^\nu}
  :=\int_0^1\omega(t)\Big(1+\log\frac1t\Big)^\nu\frac{dt}{t}.
\end{equation*}
Much of the classical theory of singular integrals is valid under the standard Dini condition with $\nu=0$, but slightly stronger conditions on this scale are also often required.
In Theorem \ref{thm:main} (and many other results in the area), it suffices to take $\nu=1$. In the course of the proof, we will see that somewhat less is actually enough, but we will not insist too much in this at this point. Many results in the literature are formulated for $\omega(t)=t^\delta$, which are easily seen to satisfy the Dini conditions with any $\nu\geq 0$.

In this paper, we will impose the following additional assumption: for some $0<r<R<\infty$,
\begin{equation}\label{eq:trunc}
  K(u,v)=0\quad\text{unless}\quad r\leq d(u,v)<R.
\end{equation}
This assumption effectively kills the singularity of the kernel $K(u,v)$, making the $L_s(\mu)$ boundedness of the related integral operator
\begin{equation*}
  Tf(s)=\int_E K(u,v)f(t)d\mu(t)
\end{equation*}
qualitatively trivial:

\begin{proposition}
Under the above assumptions,
\begin{equation*}
  \int_E \abs{K(u,v)}d\mu(u)+\int_E \abs{K(v,u)}d\mu(u)\lesssim 1+\log\frac{R}{r}=:n,
\end{equation*}
hence
\begin{equation}\label{eq:trivBd}
  \Norm{Tf}{L_s(\mu;\mathcal{X})}\lesssim n\cdot\Norm{f}{L_s(\mu;\mathcal{X})}\quad\text{for all}\quad s\in[1,\infty],\quad f\in L_s(\mu;\mathcal{X}),
\end{equation}
where $\mathcal{X}$ is an arbitrary Banach space.
\end{proposition}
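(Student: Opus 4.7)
The plan is to reduce everything to the scalar bound $\int_E |K(u,v)|\,d\mu(u)\lesssim n$ (and its companion with $u$ and $v$ swapped); once these are in hand, the operator bound on $L_s(\mu;\mathcal{X})$ is an immediate Schur-test estimate. Notice that only the size estimate on $K$ is used here; the smoothness/Dini condition plays no role at this stage.

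First I would fix $v\in E$. The truncation hypothesis \eqref{eq:trunc} localises the integrand to the annulus $\{u : r\le d(u,v)<R\}$, and on this set the size estimate gives $|K(u,v)|\lesssim 1/V(u,v)\asymp 1/V(v,d(u,v))$ by the symmetry $V(u,v)\asymp V(v,u)$. I would then split the annulus into the dyadic shells
\begin{equation*}
  A_k:=\{u\in E:2^k r\le d(u,v)<2^{k+1}r\},\qquad k=0,1,\ldots,K,
\end{equation*}
where $K=\lceil\log_2(R/r)\rceil\asymp n$. On each $A_k$ the denominator satisfies $V(v,d(u,v))\ge V(v,2^k r)$, while the numerator is controlled by $\mu(A_k)\le V(v,2^{k+1}r)\lesssim V(v,2^k r)$ from the doubling hypothesis. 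Hence each shell contributes an $O(1)$ amount, and summing over $k$ yields $\int_E |K(u,v)|\,d\mu(u)\lesssim K+1\lesssim n$, with an implicit constant independent of $v$. The same argument with the roles of $u$ and $v$ interchanged gives the companion bound $\int_E |K(v,u)|\,d\mu(u)\lesssim n$.

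For the operator bound, the triangle inequality for Bochner integrals gives
\begin{equation*}
  \Norm{Tf(u)}{\mathcal{X}}\le\int_E |K(u,v)|\Norm{f(v)}{\mathcal{X}}\,d\mu(v),
\end{equation*}
which reduces the Banach-valued estimate to the scalar one for $u\mapsto\Norm{f(u)}{\mathcal{X}}$. From here one applies Schur's test: the row- and column-sum bounds of the previous paragraph, both equal to $\lesssim n$, yield $\Norm{T}{L_s(\mu)\to L_s(\mu)}\lesssim n$ for every $s\in[1,\infty]$, which is exactly \eqref{eq:trivBd}.

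There is no real obstacle. The only points requiring minimal care are the endpoint cases $s=1$ and $s=\infty$ (where Schur degenerates to Minkowski/Fubini, using only one of the two integral bounds), and the verification that the dyadic-shell count $K+1$ is comparable to $n=1+\log(R/r)$ regardless of whether $R/r$ is large or close to $1$.
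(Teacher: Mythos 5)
Your argument is correct and is essentially the same as the paper's: a dyadic-shell decomposition of the annulus $r\le d(u,v)<R$, the size bound $|K(u,v)|\lesssim 1/V(u,v)$ together with doubling to make each shell contribute $O(1)$, and then the standard Schur-test/Minkowski step (which the paper simply calls ``standard'') to pass to the $L_s(\mu;\mathcal{X})$ operator bound. No gaps.
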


\begin{proof}
\begin{equation*}
  \int_E \abs{K(u,v)}d\mu(u)
  \leq\sum_{j: r\leq 2^j\leq\frac12 R} \int_{2^j\leq\abs{u-v}<2^{j+1}}\abs{K(u,v)}d\mu(u),
\end{equation*}
where
\begin{equation*}
  \abs{K(u,v)}\leq\frac{1}{\lambda(u,d(u,v))}
  \lesssim\frac{1}{\lambda(v,d(u,v))}\leq \frac{1}{\lambda(v,2^j)},
\end{equation*}
thus
\begin{equation*}
  \int_{2^j\leq\abs{u-v}<2^{j+1}}\abs{K(u,v)}d\mu(u)
  \lesssim\frac{\mu(B(v,2^{j+1}))}{\lambda(v,2^j)}
  \leq\frac{\lambda(v,2^{j+1})}{\lambda(v,2^j)}\lesssim 1,
\end{equation*}
and hence
\begin{equation*}
  \int_E \abs{K(u,v)}d\mu(u)
  \lesssim\sum_{j: r\leq 2^j\leq\frac12 R} 1\lesssim 1+\log\frac{R}{r}.
\end{equation*}
The estimate of $K(v,u)$ follows by symmetry of the assumptions, and the estimate of $Tf$ is then standard.
\end{proof}

But the point we wish to address is conditions under which we can beat the trivial bound \eqref{eq:trivBd}.

The notation
\begin{equation*}
  n:=1+\log\frac{R}{r}
\end{equation*}
is motivated by the key example of $E=\{1,\ldots,2^n\}$. Since $1\leq d(u,v)<2^n$ for all $(u,v)\in\dot E^2$, the assumption \eqref{eq:trunc} is automatically satisfied with $r=1$, $R=2^n$, hence $1+\log R/r=1+n\log 2\asymp n$. Another basic example arises from usual (smooth) truncations of a standard kernel.

\section{{Dyadic cubes}}

A construction of dyadic cubes in general doubling metric spaces is first due to Christ \cite{Christ:90}. Rather than just one system, we will require an ensemble of dyadic systems equipped with a suitable probability measure, leading to a notion of a random dyadic system. In the generality of doubling metric (even quasi-metric) spaces, a first such construction is from \cite{HM:12} with subsequent variants in \cite{AH:13,HK:12,HT:14,NRV}. We will quote some results from \cite{AH:13}, specialising them to a metric space (thus taking $A_0=1$ for  the quasi-triangle constant featuring in \cite{AH:13}.)

There are reference points $x^k_\alpha$, where $k\in\Z$ and $\alpha\in\mathscr A_k$, some countable index sets. There is parameter space $\Omega=\prod_{k\in\Z}\Omega_k$, where each $\Omega_k$ is a copy of the same finite set $\Omega_0$. Thus, there is a natural probability measure on $\Omega$.

For every $\omega\in\Omega$, there is a partial order relation $\leq_\omega$ among pairs $(k,\alpha)$ and $(\ell,\gamma)$. Within a fixed level, we declare that $(k,\alpha)\leq_\omega(k,\beta)$ if and only if $\alpha=\beta$. The restriction of the relation $\leq_\omega$ between levels $k$ and $k+1$ depends only on the component $\omega_k$. The relation between arbitrary levels $\ell\geq k$ is obtained via extension by transitivity.

There are also new random reference points $z^k_\alpha=z^k_\alpha(\omega_k)$ such that $d(z^k_\alpha(\omega),x^k_\alpha)<2\delta^k$. (This follows from \cite[Eq. (2.1)]{AH:13} and the definition of $z^k_\alpha$ further down the same page.)

By \cite[Theorem 2.11]{AH:13}, there are open and closed sets $\tilde Q^k_\alpha(\omega)$ and $\bar Q^k_\alpha(\omega)$ that satisfy
\begin{equation*}
  B(z^k_\alpha,\tfrac16\delta^k)\subseteq\tilde Q^k_\alpha(\omega)\subseteq\bar Q^k_\alpha(\omega)\subseteq B(z^k_\alpha,6\delta^k).
\end{equation*}
We refer to these sets as ``(dyadic) cubes''. Here $\delta\in(0,1)$ is a small parameter that indicates the ratio of the scales of two consecutive generations of the dyadic system. In $\R^d$, one typically takes $\delta=\frac12$. These cubes  satisfy the disjointness
\begin{equation*}
  \tilde Q^k_\alpha(\omega)\cap\bar Q^k_\beta(\omega)=\varnothing\quad(\alpha\neq\beta)
\end{equation*}
and the covering properties
\begin{equation*}
  E=\bigcup_\alpha\bar Q^k_\alpha(\omega),\quad
  \bar Q^k_\alpha(\omega)=\bigcup_{\beta:(k+1,\beta)\leq_\omega(k,\alpha)}\bar Q^k_\beta(\omega).
\end{equation*}

While these properties are valid for each fixed $\omega\in\Omega$, we additionally have the following probabilistic properties:
\begin{equation}\label{eq:smallBdry}
\begin{split}
  \P_\omega\Big(u\in\bigcup_\alpha\ &\partial_\eps Q^k_\alpha(\omega)\Big)\leq C\eps^\eta,\\
  &\partial_\eps Q^k_\alpha(\omega):=\{u\in \bar Q^k_\alpha(\omega):d(u,(\tilde Q^k_\alpha(\omega))^c)<\eps\delta^k\},
\end{split}
\end{equation}
for some fixed $0<\eta\leq 1\leq C<\infty$ and all $\eps>0$, and in particular the negligible boundary:
\begin{equation*}
  \P_\omega\Big(x\in\bigcup_{k,\alpha} \partial Q^k_\alpha(\omega)\Big)\leq C\eps^\eta,\quad
  \partial Q^k_\alpha(\omega):=\bar Q^k_\alpha(\omega)\setminus \tilde Q^k_\alpha(\omega).
\end{equation*}
The sets $\tilde Q^k_\alpha(\omega)$ and $\bar Q^k_\alpha(\omega)$ (and hence also $\partial_\eps Q^k_\alpha(\omega)$ and $\partial Q^k_\alpha(\omega)$) depend only on $(\omega_i)_{i=k}^\infty.$ We will also need to understand the following event:

\begin{lemma}\label{lem:m0}
Let us fix $\eps>0$ so small that the bound in \eqref{eq:smallBdry} satisfies $C\eps^\eta\leq\frac12$.
Then there exists an $m_0\in\Z_+$ such that we have the following:
 For some $k\in\Z$ and $m\geq m_0$, let $x^{k+m}_\beta,x^{k+m}_\gamma$ be two reference points at level $k+m$ with $d(x^{k+m}_\beta,x^{k+m}_\gamma)\leq\frac12\eps\delta^k$. Consider the random event
\begin{equation*}
\begin{split}
  A &:=\{\omega\in\Omega: \exists\alpha\text{ such that }(k+m,\beta)\leq_\omega(k,\alpha)\text{ and }(k+m,\gamma)\leq_\omega(k,\alpha)\} \\
  &\phantom{:}=\{\omega\in\Omega:  \exists\alpha\text{ such that }\bar Q^{k+m}_\beta(\omega)\subseteq\bar Q^k_\alpha(\omega)\text{ and }
  \bar Q^{k+m}_\gamma(\omega)\subseteq\bar Q^k_\alpha(\omega)\}
\end{split}
\end{equation*}
Then
\begin{enumerate}
  \item\label{it:dep} $A$ depends only on the components $(\omega_{k+i})_{i=0}^{m-1}$, and
  \item $\P(A)\geq\frac12$,
\end{enumerate}
\end{lemma}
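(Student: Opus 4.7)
My plan is to treat the two parts separately: part (1) is a direct consequence of the structure of the partial order $\leq_\omega$, while part (2) reduces to applying the small-boundary estimate \eqref{eq:smallBdry} to the deterministic point $u:=x^{k+m}_\beta$, followed by an elementary triangle-inequality inclusion.

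For part \eqref{it:dep}, the setup specifies that the restriction of $\leq_\omega$ to consecutive levels $k+i$ and $k+i+1$ depends only on $\omega_{k+i}$, and the relation between levels $k$ and $k+m$ is obtained by transitive extension through the intermediate levels. Hence the existence of a common ancestor at level $k$ for the points indexed by $\beta$ and $\gamma$ at level $k+m$ is determined by $\omega_k,\omega_{k+1},\ldots,\omega_{k+m-1}$, which is exactly what is asserted.

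For the probability bound, I would fix $u:=x^{k+m}_\beta$ (a deterministic point) and apply \eqref{eq:smallBdry} directly. With $\eps$ chosen so that $C\eps^\eta\leq\tfrac12$, the event
\[
  B:=\Big\{\omega: u\notin \bigcup_\alpha \partial_\eps Q^k_\alpha(\omega)\Big\}
\]
satisfies $\P(B)\geq\tfrac12$. Since the full boundaries $\partial Q^k_\alpha(\omega)$ contribute an $\omega$-null event for the fixed point $u$, almost surely $u$ lies in exactly one open cube $\tilde Q^k_{\alpha^*}(\omega)$; on $B$ we then have $d(u,(\tilde Q^k_{\alpha^*}(\omega))^c)\geq\eps\delta^k$, so the open ball $B(u,\eps\delta^k)$ is contained in $\tilde Q^k_{\alpha^*}(\omega)\subseteq\bar Q^k_{\alpha^*}(\omega)$.

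The last step is to choose $m_0$ so that for every $m\geq m_0$ both $\bar Q^{k+m}_\beta(\omega)$ and $\bar Q^{k+m}_\gamma(\omega)$ fit inside $B(u,\eps\delta^k)$. The deterministic containment $\bar Q^{k+m}_\cdot(\omega)\subseteq B(z^{k+m}_\cdot(\omega),6\delta^{k+m})\subseteq B(x^{k+m}_\cdot,8\delta^{k+m})$, combined with $d(z^{k+m}_\cdot(\omega),x^{k+m}_\cdot)<2\delta^{k+m}$ and the hypothesis $d(x^{k+m}_\beta,x^{k+m}_\gamma)\leq\tfrac12\eps\delta^k$, places both cubes inside $B(u,16\delta^{k+m}+\tfrac12\eps\delta^k)$ via the triangle inequality. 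Selecting any $m_0\in\Z_+$ with $16\delta^{m_0}<\tfrac12\eps$—a universal choice once $\eps$ and $\delta$ are fixed—forces this last ball into $B(u,\eps\delta^k)$, and hence into $\bar Q^k_{\alpha^*}(\omega)$. Thus $B\subseteq A$ and $\P(A)\geq\tfrac12$. I do not anticipate a significant obstacle here; the only bookkeeping is the triangle-inequality comparison that fixes $m_0$, and the minor measure-theoretic check that for a fixed $u$ the cube-boundary exceptional set is $\omega$-null so that the open-cube assignment $u\mapsto \tilde Q^k_{\alpha^*}(\omega)$ is almost surely well defined.
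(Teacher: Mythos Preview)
Your proposal is correct and follows essentially the same approach as the paper: both arguments apply the small-boundary estimate \eqref{eq:smallBdry} to the deterministic point $x^{k+m}_\beta$ to place it at distance $\geq\eps\delta^k$ from the complement of some $\tilde Q^k_\alpha(\omega)$, then use the triangle inequality together with $d(x^{k+m}_\beta,x^{k+m}_\gamma)\leq\tfrac12\eps\delta^k$ and a choice of $m_0$ with $\delta^{m_0}\ll\eps$ to force the $\gamma$-cube into the same ancestor. Your version is mildly more direct in that you contain the entire closed cubes $\bar Q^{k+m}_\beta,\bar Q^{k+m}_\gamma$ in $B(x^{k+m}_\beta,\eps\delta^k)\subseteq\tilde Q^k_\alpha$, whereas the paper only checks that the centre $z^{k+m}_\gamma(\omega)$ lands in $\tilde Q^k_\alpha$ and then invokes the nestedness of the dyadic system; also, your appeal to the null-boundary event is harmless but unnecessary, since on the event $B$ the condition $u\notin\partial_\eps Q^k_\alpha$ already forces $u\in\tilde Q^k_\alpha$.
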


\begin{proof}
Claim \eqref{it:dep} is immediate from the facts that the restriction of the relation $\leq_\omega$ between levels $k$ and $k+1$ depends only on the component $\omega_{k}$, and that in general it is defined via extension by transitivity.

When $\eps>0$ is chosen as stated, with probability at least $\frac12$, the point $x^{k+m}_\beta$ is not contained in any $\partial_\eps Q^k_\alpha(\omega)$. In particular, choosing $\alpha$ so that $x^{k+m}_\beta\in \bar Q^k_\alpha(\omega)$ (which must exist by the covering property), it follows that $d(x^{k+m}_\beta,(\tilde Q^k_\alpha(\omega))^c)\geq \eps\delta^k$. Now suppose that $d(x^{k+m}_\gamma,x^{k+m}_\beta)\leq\frac12\eps\delta^k$. Then 
\begin{equation*}
\begin{split}
  d(z^{k+m}_\gamma(\omega),(\tilde Q^k_\alpha(\omega))^c) 
  &\geq d(x^{k+m}_\beta,(\tilde Q^k_\alpha(\omega))^c)-d(z^{k+m}_\gamma(\omega),x^{k+m}_\gamma) \\
  &\qquad -d(x^{k+m}_\gamma,x^{k+m}_\beta) \\
  &> \eps\delta^k-2\delta^{k+m}-\frac12\eps\delta^k
  \geq(\frac12\eps-\delta^{m_0})\delta^k>0
\end{split}
\end{equation*}
provided that $m_0$ is large enough.
Hence $z^{k+m}_\gamma(\omega)\in\tilde Q^k_\alpha(\omega)$, and thus $(k+m,\gamma)\leq_\omega(k,\alpha)$.
\end{proof}

In the sequel, we will drop the somewhat heavy notation above, and denote a typical $Q^k_\alpha(\omega)$ simply by $Q$. We let $\mathscr D_k=\mathscr D_k(\omega):=\{Q^k_\alpha(\omega):\alpha\in\mathscr A_k\}$ be the dyadic cubes of generation $k$. We also write $\ell(Q):=\delta^k$ if $Q\in\mathscr D_k$. We denote by $B_Q:=B(z^k_\alpha,6\delta^k)$ the (non-random!) ball that contains $Q=Q^k_\alpha(\omega)$.

\section{{A dyadic decomposition of finite kernels}}

With the dyadic cubes defined above, we have the corresponding averaging (or conditional expectation; hence the notation) operators
\begin{equation*}
  \ave{f}_Q:=\frac{1}{\mu(Q)}\int_Q f d\mu,\quad
  \E_Q f:=1_Q\ave{f}_Q,\quad \E_i f:=\sum_{Q\in\mathscr D_i}\E_Q f
\end{equation*}
and their differences
\begin{equation*}
  \D_i f:=\E_{i+1}f-\E_i f=\sum_{Q\in\mathscr D_i}\D_Q f,\quad
  \D_Q f=\sum_{P\in\operatorname{ch}(Q)}\E_P f-\E_Q f,
\end{equation*}
where, for $Q\in\mathscr D_i$, we denote by
\begin{equation*}
  \operatorname{ch}(Q):=\{P\in\mathscr D_{i+1}:P\subseteq Q\}
\end{equation*}
the collection of its children, which has some cardinality between $1$ and a fixed finite $M$ depending only on the geometry of $(E,d)$.

For any $-\infty<\sigma<\tau<\infty$, we can decompose
\begin{equation*}
  f=\E_{\sigma} f+\sum_{i=\sigma}^\tau\D_i f+\F_\tau f,\quad\F_\tau f:=(f-\E_\tau)f=\sum_{Q\in\mathscr D_\tau}1_Q(f-\ave{f}_Q)
\end{equation*}

\begin{lemma}\label{lem:tail}
If $\tau$ is large enough so that $\delta^\tau\ll r$, then
\begin{equation*}
  \Norm{T\F_\tau f}{L_s(\mu)}\lesssim \Norm{\omega}{\operatorname{Dini}}\Norm{f}{L_s(\mu)},\qquad
  \Norm{\omega}{\operatorname{Dini}}:=\int_0^1\omega(t)\frac{dt}{t}.
\end{equation*}
\end{lemma}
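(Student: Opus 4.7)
The strategy is to exploit cancellation: $\F_\tau f$ has vanishing mean on every cube of generation $\tau$, and since $\delta^\tau\ll r$ is many scales below the inner kernel truncation, the Dini smoothness of $K$ can be harnessed to turn this cancellation into a genuine size gain.

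Fix $Q\in\mathscr D_\tau$ with centre $z_Q$ and apply cancellation on $Q$:
\begin{equation*}
  T(1_Q(f-\ave{f}_Q))(u)
  =\int_Q\bigl(K(u,v)-K(u,z_Q)\bigr)(f(v)-\ave{f}_Q)\,d\mu(v).
\end{equation*}
A case distinction on $d(u,z_Q)$ handles the two scenarios. If $d(u,z_Q)\le r/4$, then every $v\in Q$ satisfies $d(u,v)\le r/4+6\delta^\tau<r$, whence $K(u,v)=K(u,z_Q)=0$ and the integral vanishes. Otherwise $d(v,z_Q)\le 6\delta^\tau\le\tfrac12 d(u,v)$ and the $\omega$-smoothness estimate applies directly; combined with $V(u,v)\asymp V(u,z_Q)$, $d(u,v)\asymp d(u,z_Q)$, and $\int_Q|f-\ave{f}_Q|d\mu\le 2\int_Q|f|d\mu$, summing over the disjoint cubes of $\mathscr D_\tau$ yields the pointwise positive-kernel domination
\begin{equation*}
  |T\F_\tau f(u)|\lesssim\int_E\tilde K(u,v)|f(v)|\,d\mu(v),\quad
  \tilde K(u,v):=\mathbf{1}_{d(u,v)>r/8}\,\omega\Big(\frac{C\delta^\tau}{d(u,v)}\Big)\frac{1}{V(u,v)}.
\end{equation*}

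The proof finishes with Schur's test. Decomposing into dyadic shells $2^j\le d(u,v)<2^{j+1}$ with $2^j\gtrsim r$, doubling of $\mu$ controls each shell's contribution to $\int_E\tilde K(u,v)\,d\mu(v)$ by $\omega(C\delta^\tau/2^j)$, and these terms form a geometric discretisation of $\int_0^1\omega(t)\,dt/t=\Norm{\omega}{\operatorname{Dini}}$; the symmetric $u$-integral is identical thanks to the symmetry of the standard kernel estimates and $V(u,v)\asymp V(v,u)$. Schur's test then delivers the $L_s(\mu)$-bound with constant $\lesssim\Norm{\omega}{\operatorname{Dini}}$ for every $s\in[1,\infty]$. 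The only subtlety in the plan is the scale matching: the Dini estimate needs $d(v,z_Q)\le\tfrac12 d(u,v)$, while the trivial vanishing relies on $u$ being so close to $Q$ that $d(u,v)<r$ for every $v\in Q$, and the standing hypothesis $\delta^\tau\ll r$ is precisely what makes both regimes cover the $u$-space.
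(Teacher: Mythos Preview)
Your proof is correct and follows essentially the same approach as the paper: subtract $K(u,z_Q)$ using the mean-zero of $f-\ave{f}_Q$ on each $Q\in\mathscr D_\tau$, observe that the contribution vanishes when $u$ is close to $Q$ by the inner truncation, use the $\omega$-smoothness otherwise to obtain a positive dominating kernel, and finish with a Schur test over dyadic shells summing to the Dini norm. The only cosmetic difference is that the paper keeps the cruder argument $\omega(r/(d(u,v)+r))$ in the dominating kernel where you retain the tighter $\omega(C\delta^\tau/d(u,v))$, but both lead to the same $\Norm{\omega}{\operatorname{Dini}}$ bound.
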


\begin{proof}
Since $\int_Q (f(v)-\ave{f}_Q)d\mu(v)=0$, we have
\begin{equation*}
\begin{split}
  T\F_\tau f(u)
  &=\sum_{Q\in\mathscr D_\tau}\int_Q K(u,v)(f(v)-\ave{f}_Q)d\mu(v) \\
  &=\sum_{Q\in\mathscr D_\tau}\int_Q (K(u,v)-K(u,z_Q))(f(v)-\ave{f}_Q)d\mu(v).
\end{split}
\end{equation*}
If $d(u,v)\leq r$ for all $v\in Q$ (thus also for $v=z_Q$), this $Q$ does not contribute to the sum above. If $d(u,v)>r$ for some $v\in Q$, then $d(u,z_Q)\geq d(u,v)-d(v,z_Q)\gtrsim r$ if $\ell(Q)=\delta^\tau \ll r$. Then
\begin{equation*}
\begin{split}
  \int_Q &\abs{(K(u,v)-K(u,z_Q))(f(v)-\ave{f}_Q)}d\mu(v) \\
  &\lesssim\int_Q\omega\Big(\frac{d(v,z_Q)}{d(u,z_Q)}\Big)\frac{1}{\lambda(u,d(u,z_Q))}\abs{f(v)-\ave{f}_Q}d\mu(v) \\
  &\lesssim\omega\Big(\frac{r}{d(u,z_Q)}\Big)\frac{1}{\lambda(u,d(u,z_Q))}\int_Q\abs{f(v)}d\mu(v) \\
  &\lesssim\int_Q\omega\Big(\frac{r}{d(u,v)+r}\Big)\frac{1}{\lambda(u,d(u,v)+r)}\abs{f(v)}d\mu(v) \\
  &=:\int_Q K_r(u,v)\abs{f(v)}d\mu(v)
\end{split}
\end{equation*}
Hence, summing over $Q\in\mathscr D_\tau$,
\begin{equation*}
  \abs{T\F_\tau f(u)}
  \lesssim\int_E  K_{r}(u,v)\abs{f(v)}d\mu(v).
\end{equation*}

The kernel here satisfies
\begin{equation*}
\begin{split}
  &\int_E K_r(u,v)d\mu(v) \\
  &\leq \int_{B(u,r)}\omega(1)\frac{1}{\lambda(u,r)}d\mu(v)
  +\sum_{j=0}^\infty \int_{2^j r\leq\abs{u-v}<2^{j+1}r}\omega(2^{-j})\frac{1}{\lambda(u,2^j r)}d\mu(v) \\
  &\leq \omega(1)\frac{1}{\lambda(u,r)}\mu(B(u,r))
  +\sum_{j=0}^\infty \omega(2^{-j})\frac{1}{\lambda(u,2^j r)} \mu(B(u,2^{j+1}r)) \\
  &\lesssim\sum_{j=0}^\infty\omega(2^{-j})\asymp\int_0^1\omega(t)\frac{dt}{t},
\end{split}
\end{equation*}
and a similar bound for $\int_E K_r(v,u)d\mu(v)$ by symmetry; hence
\begin{equation*}
  \BNorm{u\mapsto \int_E  K_{r}(u,v)\abs{f(v)}d\mu(v)}{L_s(\mu)}\lesssim \Norm{\omega}{\operatorname{Dini}}\Norm{f}{L_s(\mu)}
\end{equation*}
by standard considerations.
\end{proof}

\begin{lemma}\label{lem:tail2}
If $\sigma$ is small enough so that $\delta^\sigma\gtrsim R$, then
\begin{equation*}
  \Norm{T\E_{\sigma} f}{L_s(\mu;\mathcal{X})}
  \lesssim \sup_{P\in\mathscr D_{\sigma}}\frac{\Norm{T(1_P)}{L_s(\mu)}}{\mu(P)^{1/s}}\Norm{f}{L_s(\mu;\mathcal{X})}.
\end{equation*}
Thus $T\E_{\sigma}$ is bounded on $L_s(\mu;\mathcal{X})$ assuming the {\em cube testing condition} that the supremum above is finite.
\end{lemma}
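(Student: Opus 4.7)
The plan is to expand $T\E_\sigma f=\sum_{P\in\mathscr D_\sigma}\ave{f}_P\,T(1_P)$ pointwise and estimate the $L_s(\mu;\mathcal X)$-norm of the sum by exploiting that, at this coarsest scale, the functions $T(1_P)$ have bounded overlap. Once that is established, the bound reduces to a sum of the form $\sum_P \Norm{\ave{f}_P}{\mathcal X}^s\Norm{T(1_P)}{L_s(\mu)}^s$, which is controlled by factoring out the cube testing quantity and then applying Jensen's inequality to the averages $\ave{f}_P$.

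The key geometric observation is the support property: if $T(1_P)(u)\neq 0$ for some $P\in\mathscr D_\sigma$, then there exists $v\in P$ with $K(u,v)\neq 0$, and hence $d(u,v)<R$. Combined with $P\subseteq B_P=B(z_P,6\delta^\sigma)$ and the standing assumption $R\lesssim\delta^\sigma$, this forces $u\in B(z_P,C\delta^\sigma)$ for an absolute constant $C$. Since the cubes of $\mathscr D_\sigma$ are pairwise disjoint and satisfy $\mu(B(z_P,C\delta^\sigma))\lesssim\mu(P)$ by doubling, for every fixed $u$ the number $N(u):=\#\{P\in\mathscr D_\sigma:T(1_P)(u)\neq 0\}$ is bounded by a constant $N_0$ depending only on the doubling constant and $C$.

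Once this bounded overlap is in hand, the numerical inequality $(a_1+\cdots+a_{N_0})^s\leq N_0^{s-1}(a_1^s+\cdots+a_{N_0}^s)$, valid for $s\geq 1$, yields the pointwise bound $\Norm{T\E_\sigma f(u)}{\mathcal X}^s\lesssim\sum_P\Norm{\ave{f}_P}{\mathcal X}^s\,\abs{T(1_P)(u)}^s$. Integrating in $u$, factoring out the cube testing constant $\sup_{P'\in\mathscr D_\sigma}\Norm{T(1_{P'})}{L_s(\mu)}^s/\mu(P')$, then applying Jensen's inequality in the form $\Norm{\ave{f}_P}{\mathcal X}^s\mu(P)\leq\int_P\Norm{f}{\mathcal X}^s\,d\mu$ and summing over the disjoint cubes $P\in\mathscr D_\sigma$, delivers the claim after taking $s$-th roots. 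The case $s=\infty$ is handled by an analogous but simpler argument (no power-mean step is needed). The only non-trivial step is the bounded-overlap claim in the second paragraph; everything else is routine, and this overlap is exactly what the hypothesis $\delta^\sigma\gtrsim R$ is designed to guarantee.
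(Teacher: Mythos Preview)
Your proof is correct and follows essentially the same approach as the paper's: both expand $T\E_\sigma f=\sum_{P}\ave{f}_P\,T(1_P)$, use the truncation assumption $d(u,v)<R\lesssim\delta^\sigma$ to confine the support of each $T(1_P)$ to an enlarged ball $cB_P$, invoke doubling to obtain bounded overlap of these balls, and then factor out the cube testing constant and apply Jensen's inequality to $\sum_P\mu(P)\Norm{\ave{f}_P}{\mathcal X}^s$. The only cosmetic difference is that the paper uses the bounded overlap at the level of the $L_s$-norm (passing from $\Norm{\sum}{L_s}$ to $(\sum\Norm{\cdot}{L_s}^s)^{1/s}$), whereas you implement it pointwise via the power-mean inequality before integrating.
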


\begin{proof}
If $v\in Q\in\mathscr D_{\sigma}$ and $u\in E$, then $K(u,v)$ is non-zero only if $d(u,v)<R\lesssim\ell(Q)$, and hence only if $u\in cB_Q$ for some constant $c$. Thus
\begin{equation*}
  T\E_{\sigma} f(u)
  =\sum_{Q\in\mathscr D_{\sigma}}\int_Q K(u,v)d\mu(v)\ave{f}_Q
  =\sum_{Q\in\mathscr D_{\sigma}} 1_{c B_Q}(u)T(1_Q)(u)\ave{f}_Q.
\end{equation*}
By the geometric doubling property, the balls $c B_Q$ have bounded overlap, and hence
\begin{equation*}
\begin{split}
  \Norm{T\E_{\sigma} f}{L_s(\mu)}
  &\lesssim\Big(\sum_{Q\in\mathscr D_{\sigma}} \Norm{1_{c B_Q}T(1_Q)\ave{f}_Q}{L_s(\mu;\mathcal{X})}^s\Big)^{1/s} \\
  &\leq\sup_{P\in\mathscr D_{\sigma}}\frac{\Norm{T(1_P)}{L_s(\mu)}}{\mu(P)^{1/s}}
  \Big(\sum_{Q\in\mathscr D_{\sigma}} \mu(Q) \Norm{\ave{f}_Q}{\mathcal{X}}^s\Big)^{1/s} \\
  &\leq\sup_{P\in\mathscr D_{\sigma}}\frac{\Norm{T(1_P)}{L_s(\mu)}}{\mu(P)^{1/s}}\Norm{f}{L_s(\mu;\mathcal{X})}.
\end{split}
\end{equation*}
\end{proof}

To prove the boundedness of $T$ on $L_s(\mu;\mathcal{X})$, we consider the pairing
\begin{equation*}
  \pair{Tf}{g},\quad f\in L_s(\mu;\mathcal{X}),\quad g\in L_{s'}(\mu;\mathcal{X}^\prime).
\end{equation*}
This can be expanded as
\begin{equation*}
  \pair{Tf}{g}
  =\pair{T\E_{\sigma} f}{g}+\sum_{i=\sigma}^\tau\pair{T\D_i f}{g}+\pair{T\F_\tau f}{g}.
\end{equation*}
We already dealt with the first and the last terms in Lemmas \ref{lem:tail} and \ref{lem:tail2}. In the sum, expanding also $g$, we obtain
\begin{equation}\label{eq:expand2}
\begin{split}
  \sum_{i=\sigma}^\tau\pair{T\D_i f}{g}
  &=\sum_{i=\sigma}^\tau\pair{T\D_i f}{\E_{\sigma} g}
  +\sum_{i,j=\sigma}^\tau\pair{T\D_i f}{\D_j g}+\sum_{i=\sigma}^\tau\pair{T\D_i f}{\F_\tau g} \\
\end{split}
\end{equation}
The double sum in \eqref{eq:expand2} can be reorganised as
\begin{equation*}
\begin{split}
  \sum_{i,j=\sigma}^\tau &\pair{T\D_i f}{\D_j g}
  =\sum_{\substack{i,j=a \\ i =j}}^b\pair{T\D_i f}{\D_j g}
   +\sum_{\substack{i,j=a \\ i < j}}^b\pair{T\D_i f}{\D_j g}
   +\sum_{\substack{i,j=a \\ i > j}}^b\pair{T\D_i f}{\D_j g} \\
  &=\sum_{i=a }^b\pair{T\D_i f}{\D_i g}
   +\sum_{j=\sigma}^\tau\pair{T(\E_j f-\E_{\sigma} f)}{\D_j g}
   +\sum_{i=\sigma}^\tau\pair{T\D_i f}{\E_i g-\E_{\sigma} g} \\
  &=\sum_{i=\sigma}^\tau\Big(\pair{T\D_i f}{\D_i g}+\pair{T\E_i f}{\D_i g}+\pair{T\D_i f}{\E_i g}\Big) \\
  &\qquad-\sum_{i=\sigma}^\tau\pair{T\E_{\sigma} f}{\D_j g}-\sum_{i=\sigma}^\tau\pair{T\D_i}{\E_{\sigma} g}.
\end{split}
\end{equation*}
Substituting back to \eqref{eq:expand2}, the last term right above cancels out with the first term in \eqref{eq:expand2}, leaving
\begin{equation*}
\begin{split}
  \sum_{i=\sigma}^\tau\pair{T\D_i f}{g}
  &=\sum_{i=\sigma}^\tau\Big(\pair{T\D_i f}{\D_i g}+\pair{T\E_i f}{\D_i g}+\pair{T\D_i f}{\E_i g}\Big) \\
  &\qquad-\sum_{i=\sigma}^\tau\pair{T\E_{\sigma} f}{\D_j g}+\sum_{i=\sigma}^\tau\pair{T\D_i f}{\F_\tau g}.
\end{split}
\end{equation*}
Here
\begin{equation*}
  \sum_{i=\sigma}^\tau\pair{T\E_{\sigma} f}{\D_j g}=\pair{T\E_{\sigma} f}{\E_\tau g-\E_{\sigma} g}
\end{equation*}
is controlled by Lemma \ref{lem:tail2} and the contractivity of $\E_i$, while
\begin{equation*}
  \sum_{i=\sigma}^\tau\pair{T\D_i f}{\F_\tau g}
  =\pair{\E_\tau-\E_{\sigma} f}{T^*\F_\tau g}
\end{equation*}
is controlled by an application of Lemma \ref{lem:tail} on the dual side, observing the symmetry of the assumptions.

Altogether, we find that
\begin{equation*}
\begin{split}
  &\Babs{\pair{Tf}{g}-\sum_{i=\sigma}^\tau\Big(\pair{T\D_i f}{\D_i g}+\pair{T\E_i f}{\D_i g}+\pair{T\D_i f}{\E_i g}\Big)} \\
  &\qquad\lesssim\Big(\sup_{P\in\mathscr D_{\sigma}}\frac{\Norm{T(1_P)}{L_s(\mu)}}{\mu(P)^{1/s}}+\Norm{\omega}{\operatorname{Dini}}\Big)\Norm{f}{L_s(\mu;\mathcal{X})}\Norm{g}{L_{s'}(\mu;\mathcal{X}^\prime)}.
\end{split}
\end{equation*}
This is valid for any Banach space $\mathcal{X}$, and hence the core is estimating the sum on the left. For the previous estimate, we need $\delta^\tau\ll r$ and $\delta^\sigma\gtrsim R$, where the implied constants are absolute. Thus we may take $\tau\asymp \log\frac1r$ and $\sigma\asymp\log\frac1R$ so that the length of the sum is
\begin{equation*}
  1+(\tau-\sigma)\asymp 1+\log\frac{R}{r}=n.
\end{equation*}
Proving a uniform bound for each term and applying the triangle inequality would recover the trivial estimate, which is linear in $n$, and which we already obtained by more elementary considerations. We wish to make use of the above decomposition to beat this trivial bound.

\section{{Estimates for Haar coefficients}}

The difference operators $\D_i$ have the piecewise local expansion
\begin{equation*}
  \D_i f=\sum_{Q\in\mathscr D_i}\D_Q f,
\end{equation*}
where each $\D_Q$, in turn, can be expanded in terms of a bounded number of ``Haar'' functions associated with $Q$; they are constant on the dyadic children of $Q$:
\begin{equation*}
  \D_Q f=\sum_{\alpha=1}^{m_Q-1}\pair{f}{h_Q^\alpha}h_Q^\alpha.
\end{equation*}
Here $m_Q\in\{1,\ldots,M\}$ is the number of dyadic children of $Q$. In general it can happen that $m_Q=1$, in which case the sum above is empty, and $\D_Q=0$. Denoting $h_Q^0:=\mu(Q)^{-1/2}1_Q$, we obtain similar formulas for the averaging operators
\begin{equation*}
  \E_i f=\sum_{Q\in\mathscr D_i}\E_Q f,\quad \E_Q f=\ave{f}_Q 1_Q=\pair{f}{h_Q^0}h_Q^0.
\end{equation*}
Thus each of the three types of terms $\pair{T\D_i f}{\D_i g}$, $\pair{T\E_i f}{\D_i g}$, $\pair{T\D_i f}{\E_i g}$ can be expanded in terms of a bounded number of sums of the type
\begin{equation*}
  \sum_{P,Q\in\mathscr D_i} \chi_{\alpha,\beta}\pair{Th_P^\alpha}{h_Q^\beta}\pair{f}{h_P^\alpha}\pair{g}{h_Q^\beta},
\end{equation*}
where $\chi_{\alpha,\beta}\in\{0,1\}$. Since there is at most one $\E_i$, at most one of $\alpha$ and $\beta$ can be $0$ in any given sum of this type.

Estimates for the Haar coefficients $\pair{Th_P^\alpha}{h_Q^\beta}$ are well known in the Euclidean setting. The extension to doubling metric spaces depends on the fact that a certain Hardy inequality remains valid for the dyadic cubes in this situation, as observed by Auscher and Routin \cite{AR:13}. We only need the following special case of their \cite[Lemma 2.4]{AR:13}: If $P,Q\in\mathscr D$ are disjoint, then
\begin{equation}\label{eq:Hardy}
  \int_P\int_Q\frac{1}{V(u,v)}d\mu(u)d\mu(v)\lesssim\mu(P)^{1/s}\mu(Q)^{1/s'}
\end{equation}
for $s\in(1,\infty)$. Then
\begin{equation*}
  \pair{Th_P^\alpha}{h_Q^\beta}
  =\sum_{\substack{ P'\in\operatorname{ch}(P) \\ Q'\in\operatorname{ch}(Q)}}\pair{T1_{P'}}{1_{Q'}}\ave{h_P^\alpha}_{P'}\ave{h_Q^\beta}_{Q'}.
\end{equation*}
If $\ell(P)=\ell(Q)$ and $P'\neq Q'$, then $P'\cap Q'=\varnothing$, and \eqref{eq:Hardy} applies to give
\begin{equation*}
\begin{split}
  \abs{\pair{T1_{P'}}{1_{Q'}}}
  &=\Babs{\int_{P'}\int_{Q'}K(u,v)dudv} \\
  &\lesssim\int_{P'}\int_{Q'}\frac{1}{V(u,v)}dudv
  \lesssim\mu(P')^{1/2}\mu(Q')^{1/2}.
\end{split}
\end{equation*}
Thus
\begin{equation}\label{eq:HaarClose}
\begin{split}
  &\Babs{\sum_{\substack{ P'\in\operatorname{ch}(P) \\ Q'\in\operatorname{ch}(Q) \\ P'\neq Q'}}\pair{T1_{P'}}{1_{Q'}}\ave{h_P^\alpha}_{P'}\ave{h_Q^\beta}_{Q'}} \\
  &\lesssim\sum_{P'\in\operatorname{ch}(P)} \mu(P')^{1/2} \abs{\ave{h_P^\alpha}_{P'}} \sum_{ Q'\in\operatorname{ch}(Q)} \mu(Q')^{1/2} \abs{\ave{h_Q^\beta}_{Q'}},
\end{split}
\end{equation}
where
\begin{equation*}
\begin{split}
  \sum_{P'\in\operatorname{ch}(P)} \mu(P')^{1/2} \abs{\ave{h_P^\alpha}_{P'}}
  &\leq M^{1/2}\Big(\sum_{P'\in\operatorname{ch}(P)} \mu(P') \abs{\ave{h_P^\alpha}_{P'}}^2\Big)^{1/2} \\
  &= M^{1/2}\Norm{h_P^\alpha}{L_2(\mu)}=M^{1/2},
\end{split}
\end{equation*}
and similarly for the sum over $Q'\in\operatorname{ch}(Q)$. Thus $\eqref{eq:HaarClose}\lesssim 1$.

If $P,Q\in\mathscr D_i$ are different, then all their children $P',Q'$ satisfy $P'\neq Q'$, and we have proved that
\begin{equation*}
  \abs{\pair{Th_P^\alpha}{h_Q^\beta}}\lesssim 1,\quad P,Q\in\mathscr D_i,\quad P\neq Q.
\end{equation*}
If $P=Q$, then we need to deal in addition with the sum
\begin{equation*}
\begin{split}
  &\Babs{\sum_{P'\in\operatorname{ch}(P) }\pair{T1_{P'}}{1_{P'}}\ave{h_P^\alpha}_{P'}\ave{h_P^\beta}_{P'}} \\
  &\qquad\leq\sup_{Q\in\mathscr D}\frac{\abs{\pair{T1_Q}{1_Q}}}{\mu(Q)}\sum_{P'\in\operatorname{ch}(P) }\mu(P')\abs{\ave{h_P^\alpha}_{P'}\ave{h_P^\beta}_{P'}},
\end{split}
\end{equation*}
where
\begin{equation*}
\begin{split}
  &\sum_{P'\in\operatorname{ch}(P) }\mu(P')\abs{\ave{h_P^\alpha}_{P'}\ave{h_P^\beta}_{P'}} \\
  &\qquad\leq\Big(\sum_{P'\in\operatorname{ch}(P) }\mu(P')\abs{\ave{h_P^\alpha}_{P'}}^2\Big)^{1/2}
  \Big(\sum_{P'\in\operatorname{ch}(P) }\mu(P')\abs{\ave{h_P^\beta}_{P'}}^2\Big)^{1/2} \\
  &\qquad=\Norm{h_P^\alpha}{L_2(\mu)}\Norm{h_P^\beta}{L_2(\mu)}=1.
\end{split}
\end{equation*}
The finiteness of
\begin{equation*}
  \Norm{T}{\operatorname{wbp}(\mathscr D)}:=\sup_{Q\in\mathscr D}\frac{\abs{\pair{T1_Q}{1_Q}}}{\mu(Q)}
\end{equation*}
is the so-called weak boundedness property. It follows from the finiteness of the testing quantity
\begin{equation*}
  \Norm{T}{\operatorname{test}^s(\mathscr D)}:=\sup_{Q\in\mathscr D}\frac{\Norm{T1_Q}{L_s(\mu)}}{\mu(Q)^{1/s}},
\end{equation*}
which in turn is clearly dominated by the operator norm of $T$ on $L_s(\mu)$.

If $P$ and $Q$ are well separated, we need and have a better estimate. Assuming for instance that $\alpha\neq 0$ (the case $\beta\neq 0$ being symmetric), we have
\begin{equation*}
\begin{split}
  \pair{Th_P^\alpha}{h_Q^\beta}
  &=\int_P\int_Q K(u,v)h_P^\alpha(v)h_Q^\beta(u)d\mu(u)d\mu(v) \\
  &=\int_P\int_Q [K(u,v)-K(u,z_P)]h_P^\alpha(v)h_Q^\beta(u)d\mu(u)d\mu(v).
\end{split}
\end{equation*}
If $d(P,Q)\gg\ell(P)=\ell(Q)$, then $v\in P$ and $u\in Q$ satisfy $d(v,z_P)\lesssim\ell(P)\ll d(u,v)$, and hence
\begin{equation}\label{eq:HaarFar}
\begin{split}
  \abs{\pair{Th_P^\alpha}{h_Q^\beta}}
  &\lesssim\int_P\int_Q \omega\Big(\frac{\ell(P)}{d(u,z_P)}\Big)\frac{1}{V(u,z_P)}\abs{h_P^\alpha(v)h_Q^\beta(u)}d\mu(u)d\mu(v) \\
  &\lesssim \omega\Big(\frac{\ell(P)}{d(z_Q,z_P)+\ell(P)}\Big)\frac{\Norm{h_P^\alpha}{L_1(\mu)}\Norm{h_Q^\beta}{L_1(\mu)}}{V(z_Q,d(z_Q,z_P)+\ell(P))} \\
  &\lesssim \omega\Big(\frac{\ell(P)}{d(z_Q,z_P)+\ell(P)}\Big)\frac{\sqrt{\mu(P)\mu(Q)}}{V(z_Q,d(z_Q,z_P)+\ell(P))}.
\end{split}
\end{equation}
Actually, this final bound is valid also for $d(P,Q)\lesssim\ell(P)=\ell(Q)$. In this case, $d(z_Q,z_P)+\ell(P)\asymp 1$, so that the argument of $\omega$ is roughly $1$, and $$V(z_Q,d(z_Q,z_P)+\ell(P))\asymp\mu(Q)\asymp\mu(P).$$
Thus, when $d(P,Q)\lesssim\ell(P)=\ell(Q)$, the bound \eqref{eq:HaarFar} reduces to the uniform bound obtained earlier.

\section{{Extracting paraproducts}}\label{sec:parap}

In the two sums
\begin{equation*}
  \sum_{i=\sigma}^\tau\pair{T\E_i f}{\D_i g},\quad  \sum_{i=\sigma}^\tau\pair{T\D_i f}{\E_i g},
\end{equation*}
we need to force some cancellation as follows. Let us deal with the first one, the second being symmetric. A typical term in this sum is
\begin{equation*}
  \pair{T\E_i f}{\D_i g}
  =\sum_{P,Q\in\mathscr D_i}\pair{T1_P}{\D_Q g}\ave{f}_P.
\end{equation*}
Writing
\begin{equation*}
  \ave{f}_P=(\ave{f}_P-\ave{f}_Q)+\ave{f}_Q,
\end{equation*}
we have introduced some cancellation into the first term. On the other hand, in the sum involving the second term, the only factor depending on $P$ is the indicator $1_P$ in the pairing $\pair{T1_P}{\D_Q g}$. Thus, summing over $P\in\mathscr D_i$ first, we obtain by linearity
\begin{equation*}
  \sum_{P,Q\in\mathscr D_i}\pair{T1_P}{\D_Q g}\ave{f}_Q
  =\sum_{Q\in\mathscr D_i}\Bpair{T\sum_{P\in\mathscr D_i}1_P}{\D_Q g}\ave{f}_Q
  =\sum_{Q\in\mathscr D_i}\pair{T1}{\D_Q g}\ave{f}_Q.
\end{equation*}
For the finite singular integrals that we consider, there is no trouble in making sense of the expression ``$T1$'' above, as the action of the integral operator $T$ on bounded function is well defined. Denoting (as usual) $b:=T1$, and using the self-adjointness of $\D_Q$, we find that
\begin{equation*}
    \sum_{i=\sigma}^\tau \sum_{Q\in\mathscr D_i}\pair{b}{\D_Q g}\ave{f}_Q
    =\Bpair{\sum_{i=\sigma}^\tau \sum_{Q\in\mathscr D_i}\D_Q b\ave{f}_Q}{g}
    =:\Bpair{\Pi_b^{\sigma,\tau}f}{g}.
\end{equation*}
Here
\begin{equation*}
  \Pi_b^{\sigma,\tau}f
  :=\sum_{i=\sigma}^\tau \sum_{Q\in\mathscr D_i}\D_Q b\ave{f}_Q
  =\sum_{i=\sigma}^\tau \D_i b \E_i f
\end{equation*}
is a truncated version of a {\em dyadic paraproduct}. We can estimate it as follows. Noting that $\D_i$ are martingale differences, the assumption that $\mathcal{X}$ has martingale type $p\in[1,2]$ implies that
\begin{equation*}
\begin{split}
  \Norm{\Pi_b^{\sigma,\tau}f}{L_s(\mu;\mathcal{X})}
  &\lesssim\BNorm{\Big(\sum_{i=\sigma}^\tau\abs{\D_i b}^p\Norm{\E_i f}{\mathcal{X}}^p\Big)^{1/p}}{L_s(\mu;\mathcal{X})} \\
  &\leq n^{1/p-1/2}\BNorm{\Big(\sum_{i=\sigma}^\tau\abs{\D_i b}^2\Norm{\E_i f}{\mathcal{X}}^2\Big)^{1/2}}{L_s(\mu;\mathcal{X})} \\
  &\leq n^{1/p-1/2}\BNorm{\Big(\sum_{i=\sigma}^\tau\sum_{Q\in\mathscr D_i}\abs{\D_Q b}^2\ave{\Norm{f}{\mathcal{X}}}_Q^2\Big)^{1/2}}{L_s(\mu;\mathcal{X})}.
\end{split}
\end{equation*}

Next, we make a stopping time construction as follows. The initial layer of stopping cubes consist of all $Q\in\mathscr D_\sigma$. Assuming that some stopping cube $S\in\mathscr D$ has been picked, we look for its maximal dyadic subcubes $S'\subsetneq S$ such that either
\begin{equation*}
  \ave{\Norm{f}{\mathcal{X}}}_{S'}>4\ave{\Norm{f}{\mathcal{X}}}_S,
\end{equation*}
or
\begin{equation*}
  \sum_{\substack{Q\in \mathscr D \\ S'\subsetneq Q\subseteq S}}\abs{D_Q b(u)}^2>\lambda^2\quad\text{for all}\quad u\in S',
\end{equation*}
where $\lambda>0$ will be specified shortly. Note that the left-hand side of the stopping criterion is constant on $S'$, since $D_Q b$ is constant on the dyadic children of $Q$, so the condition ``for all $u\in S'$'' could be equivalently replaced by ``for some $u\in S'$''. Let us refer to the stopping cubes arising from the first or the second criterion as being of the first or the second kind, respectively.

For the stopping cubes of the first kind (pairwise disjoint by their maximality), it is immediate from the stopping criterion that
\begin{equation*}
  \sum_{S'}\mu(S')\leq\sum_{S'}\frac{1}{4\ave{\Norm{f}{\mathcal{X}}}_S}\int_{S'}\Norm{f}{\mathcal{X}}d\mu
  \leq\frac{1}{4\ave{\Norm{f}{\mathcal{X}}}_S}\int_S\Norm{f}{\mathcal{X}}d\mu=\frac{1}{4}\mu(S).
\end{equation*}

To derive a similar estimate for the stopping cubes of the second kind, let us consider the dyadic square function and its truncations
\begin{equation*}
  \mathfrak{S}b:=\Big(\sum_{Q\in\mathscr D}\abs{D_Q b}^2\Big)^{1/2},\quad
  \mathfrak{S}_P b:=\Big(\sum_{\substack{Q\in\mathscr D \\ Q\subseteq P}}\abs{D_Q b}^2\Big)^{1/2}
  =\mathfrak{S}(1_P(b-\ave{b}_P)).
\end{equation*}
If $S'\subsetneq S$ is one of the stopping cubes of the second kind, then
\begin{equation*}
  \mathfrak{S}_Sb(u)^2\geq \sum_{\substack{Q\in \mathscr D \\ S'\subsetneq Q\subseteq S}}\abs{D_Q b(u)}^2>\lambda^2\quad\text{for all}\quad u\in S'.
\end{equation*}
Thus
\begin{equation*}
\begin{split}
  \mu\Big(\bigcup S'\Big)\leq\mu(\mathfrak{S}_Sb>\lambda)
  &\leq\frac{1}{\lambda}^2\int_E (\mathfrak{S}_Sb)^2d\mu \\
  &=\frac{1}{\lambda^2}\int_S\abs{b-\ave{b}_S}^2d\mu 
  \leq\frac{1}{\lambda^2}\Norm{b}{\operatorname{BMO}^2(\mathscr D)}^2\mu(S),
\end{split}
\end{equation*}
where
\begin{equation*}
  \Norm{b}{\operatorname{BMO}^2(\mathscr D)}
  :=\sup_{Q\in\mathscr D}\Big(\frac{1}{\mu(Q)}\int_Q\abs{b-\ave{b}_Q}^2d\mu\Big)^{1/2}
\end{equation*}
is the dyadic BMO norm based on $L_2$ averages. (The different BMO norms are equivalent by the John--Nirenberg inequality, which remains valid in this generality.)

If we choose some $\lambda\gg\Norm{b}{\operatorname{BMO}^2(\mathscr D)}$, then $\mu(\bigcup S')\ll\mu(S)$. Denoting by $\mathscr S$ the collection of all stopping cubes of both kinds, and
\begin{equation*}
  E_S:=S\setminus\bigcup_{\substack{S'\in\mathscr S \\ S'\subsetneq S}}S',
\end{equation*}
these subsets $E_S\subseteq S$ are pairwise disjoint, and
\begin{equation*}
  \mu(E_S)\gtrsim\mu(S).
\end{equation*}
The existence of such subsets is referred to as the {\em sparseness} of the collection $\mathscr S$.

Let us now return to the estimation of $\Pi_b^{\sigma,\tau} f$. For each $Q\in\mathscr D_i$ for some $i\in[\sigma,\tau]$, let $\pi Q\in\mathscr S$ denote the minimal stopping cube that contains $Q$. Regrouping the summation under these stopping parents, we have
\begin{equation*}
\begin{split}
  \sum_{i=\sigma}^{\tau}\sum_{Q\in\mathscr D_i}\abs{\D_Q b}^2\ave{\Norm{f}{\mathcal{X}}}_Q^2
  &\leq\sum_{S\in\mathscr S}\sum_{\substack{ Q\in\mathscr D \\ \pi Q=S}}\abs{\D_Q b}^2\ave{\Norm{f}{\mathcal{X}}}_Q^2 \\
  &\lesssim\sum_{S\in\mathscr S}\Big(\sum_{\substack{ Q\in\mathscr D \\ \pi Q=S}}\abs{\D_Q b}^2\Big)\ave{\Norm{f}{\mathcal{X}}}_S^2 \\
  &\lesssim\sum_{S\in\mathscr S}\Norm{b}{\operatorname{BMO}^2(\mathscr D)}^2 1_S \ave{\Norm{f}{\mathcal{X}}}_S^2.
\end{split}
\end{equation*}
Hence
\begin{equation*}
\begin{split}
  &\BNorm{\Big(\sum_{i=\sigma}^{\tau}\sum_{Q\in\mathscr D_i}\abs{\D_Q b}^2\ave{\Norm{f}{\mathcal{X}}}_Q^2\Big)^{1/2}}{L_s(\mu)} \\
  &\qquad\lesssim \Norm{b}{\operatorname{BMO}^2(\mathscr D)}
  \BNorm{\Big(\sum_{S\in\mathscr S} 1_S \ave{\Norm{f}{\mathcal{X}}}_S^2\Big)^{1/2}}{L_s(\mu)} \\
\end{split}
\end{equation*}
Here 
\begin{equation*}
   \BNorm{\Big(\sum_{S\in\mathscr S} 1_S \ave{\Norm{f}{\mathcal{X}}}_S^2\Big)^{1/2}}{L_s(\mu)}
   \lesssim \BNorm{\sum_{S\in\mathscr S} 1_S \ave{\Norm{f}{\mathcal{X}}}_S}{L_s(\mu)}.
\end{equation*}
Dualising with $h\in L_{s'}(\mu)$ and using sparseness and the boundedness of the dyadic maximal operator
\begin{equation*}
  M_{\mathscr D}\phi(u):=\sup_{Q\in\mathscr D}1_Q(u)\ave{\abs{\phi}}_Q,
\end{equation*}
we find that
\begin{equation*}
\begin{split}
  \int_E\Big(\sum_{S\in\mathscr S} 1_S \ave{\Norm{f}{\mathcal{X}}}_S\Big)hd\mu
  &=\sum_{S\in\mathscr S} \ave{\Norm{f}{\mathcal{X}}}_S\ave{h}_S\mu(S) 
  \lesssim\sum_{S\in\mathscr S} \ave{\Norm{f}{\mathcal{X}}}_S\ave{h}_S\mu(E_S) \\
  &\leq\sum_{S\in\mathscr S}\int_{E_S} M_{\mathscr D}(\Norm{f}{\mathcal{X}})(u)M_{\mathscr D}h(u)d\mu(u) \\
  &\leq\int_E M_{\mathscr D}(\Norm{f}{\mathcal{X}})(u)M_{\mathscr D}h(u)d\mu(u) \\
  &\leq\bNorm{M_{\mathscr D}(\Norm{f}{\mathcal{X}})}{L_s(\mu)}\Norm{M_{\mathscr D}h}{L_{s'}(\mu)} \\
  &\lesssim\bNorm{\Norm{f}{\mathcal{X}}}{L_s(\mu)}\Norm{h}{L_{s'}(\mu)}=\Norm{f}{L_s(\mu;\mathcal{X})}\Norm{h}{L_{s'}(\mu)}.
\end{split}
\end{equation*}
Thus
\begin{equation*}
  \BNorm{\sum_{S\in\mathscr S} 1_S \ave{\Norm{f}{\mathcal{X}}}_S}{L_s(\mu)}
  \lesssim\Norm{f}{L_s(\mu;\mathcal{X})},
\end{equation*}
hence
\begin{equation*}
  \BNorm{\Big(\sum_{i=\sigma}^{\tau}\sum_{Q\in\mathscr D_i}\abs{\D_Q b}^2\ave{\Norm{f}{\mathcal{X}}}_Q^2\Big)^{1/2}}{L_s(\mu)} 
  \lesssim \Norm{b}{\operatorname{BMO}^2(\mathscr D)}\Norm{f}{L_s(\mu;\mathcal{X})}
\end{equation*}
and therefore
\begin{equation*}
  \Norm{\Pi_b^{\sigma,\tau}f}{L_s(\mu;\mathcal{X})}\lesssim n^{1/p-1/2}\Norm{b}{\operatorname{BMO}^2(\mathscr D)}\Norm{f}{L_s(\mu;\mathcal{X})},
\end{equation*}
provided that $\mathcal{X}$ has martingale type $p\in[1,2]$.

Recall that $b=T1$. That this belongs to BMO is a well known necessary condition for the boundedness of $T$ on $L_s(\mu)$. The standard argument becomes particularly simple in the present finite case, since $T1$ is a pointwise well-defined function (not an equivalence class modulo constants as in the general theory). We have
\begin{equation*}
  T1=T(1_{cB_Q})+T(1_{(cB_Q)^c})
\end{equation*}
and hence
\begin{equation*}
\begin{split}
  \Big(\int_Q &\abs{T1-\ave{T1}_Q}^sd\mu\Big)^{1/s}
  \leq 2\inf_z\Big(\int_Q\abs{T1-z}^sd\mu\Big)^{1/s} \\
  &\leq 2\Big(\int_Q\abs{T1-T(1_{(cB_Q)^c})(z_Q)}^sd\mu\Big)^{1/s} \\
  &\leq 2\Big(\int_Q\abs{T(1_{cB_Q})}^sd\mu\Big)^{1/s}+2\Big(\int_Q\abs{T(1_{(cB_Q)^c})-T(1_{(cB_Q)^c})(z_Q)}^sd\mu\Big)^{1/s}.
\end{split}
\end{equation*}
The first term is dominated by the ball testing condition
\begin{equation*}
  \Norm{T}{\operatorname{test}^s(\mathscr B)}:=\sup_{B\in\mathscr B}\frac{\Norm{T 1_B}{L_s(\mu)}}{\mu(B)^{1/s}},
\end{equation*}
where $\mathscr B$ is the family of all balls $B(u,t)\subset E$; namely
\begin{equation*}
  \Big(\int_Q\abs{T(1_{cB_Q})}^sd\mu\Big)^{1/s}
  \leq\Norm{T}{\operatorname{test}^s(\mathscr B)}\mu(cB_Q)^{1/s}\lesssim\Norm{T}{\operatorname{test}^s(\mathscr B)}\mu(Q)^{1/s}.
\end{equation*}
For the remaining integral, we observe that
\begin{equation*}
\begin{split}
  \int_Q &\abs{T(1_{(cB_Q)^c})(u)-T(1_{(cB_Q)^c})(z_Q)}^sd\mu(u) \\
  &\leq\int_Q\int_{(cB_Q)^c}\abs{K(u,v)-K(z_Q,v)}d\mu(v)d\mu(u) \\
  &\lesssim\int_Q\int_{(cB_Q)^c}\omega\Big(\frac{d(u,z_Q)}{d(v,z_Q)}\Big)\frac{1}{V(v,z_Q)}d\mu(v)d\mu(u).
\end{split}
\end{equation*}
The inner integral is bounded by a constant by computations like those in the proof of Lemma \ref{lem:tail}, and the outer integral over $Q$ then gives simply $\mu(Q)$.

\section{{Reorganisation of the remaining parts}}

After the extraction of the paraproducts, we are left with estimating sums of the form
\begin{equation}\label{eq:TPQ}
  \sum_{i=a}^b\sum_{P,Q\in\mathscr D_i}T(P,Q),\quad
  T(P,Q)\in\begin{cases}\pair{T\D_P f}{\D_Q g}, \\ (\ave{f}_P-\ave{f}_Q)\pair{T1_P}{\D_Q g}, \\ \pair{T\D_P f}{1_Q}(\ave{g}_Q-\ave{g}_P),\end{cases}
\end{equation}
where we note that the last two ``products'' are actually duality pairings between e.g. $(\ave{f}_P-\ave{f}_Q)\in \mathcal{X}$ and $\pair{T1_P}{\D_Q g}\in \mathcal{X}^\prime$.

Dropping for the moment the summands, and recalling the parameter $m_0$ guaranteed by Lemma \ref{lem:m0}, we further reorganise the summation as
\begin{equation*}
  \sum_{P,Q\in\mathscr D_i}
  =\sum_{\substack{P,Q\in\mathscr D_i \\ d(x_P,x_Q)\leq \frac12\eps\delta^{-m_0} \ell(P)}}
  +\sum_{m=m_0+1}^\infty\sum_{\substack{P,Q\in\mathscr D_i \\ \frac12\eps\delta^{1-m} \ell(P)<d(x_P,x_Q)\leq\frac12\eps\delta^{-m} \ell(P)}}.
\end{equation*}
In the $m$-th sum, the Haar coefficients (appearing when expanding the summands $T(P,Q)$) can be estimated by
\begin{equation}\label{eq:TPQest}
  \abs{\pair{Th_P^\alpha}{h_Q^\beta}}\lesssim\omega(\delta^{m})\frac{\sqrt{\mu(P)\mu(Q)}}{V(z_P,\delta^{-m}\ell(P))},
\end{equation}
which depends on the lower bound on $d(x_P,x_Q)$. The case $m=m_0$ is included, in which case this bound reduces to just $O(1)$, regarding $m_0$ as fixed.

Let us observe that we can also truncate the summation over $m$ from above. Due to the property that $K(u,v)\neq 0$ only if $d(u,v)<R$, a pairing like $\pair{T\D_P f}{\D_Q g}$ (or one with $\D_P f$ replaced by $1_P$, or $\D_Q g$ replaced by $1_Q$) can only be non-zero if $d(P,Q)<R$ and hence also $d(x_P,x_Q)\lesssim R$. But in the $m$-th term we also have the lower bound $d(x_P,x_Q)>\frac12\eps\delta^{1-m}\ell(P)\gtrsim\frac12\eps\delta^{1-m}r$. Hence this term gives no contribution unless $\delta^{-m}\lesssim R/r$, thus $m\lesssim n$.

To proceed further, we will need to make use of a random selection of our dyadic systems $\mathscr D$. We can write the decomposition of the pairing $\pair{Tf}{g}$ relative to any such dyadic system, and then take the expectation over the choice of $\mathscr D$. By linearity, we can move the expectation inside the summations. If we assume that $f$ and $g$ are boundedly supported (such functions are in any case dense in the spaces that we consider), we entirely avoid any issues of convergence: The summation over $i$ is finite already, and for each $i$, the bounded support of $f$ can only intersect finitely many of the cube $P,Q\in\mathscr D_i$. 

Let $A_m(P,Q)$ denote the random event---as in Lemma \ref{lem:m0}, only with a slightly different notation---that the two (random) cubes $P,Q\in\mathscr D_i$ share a common  ancestor in the (random) dyadic system $\mathscr D_{i-m}$. By the summation condition that $d(x_P,x_Q)\leq\frac12\eps\delta^{-m} \ell(P)$, Lemma \ref{lem:m0} guarantees that $\P(A_m(P,Q))\in[\frac12,1]$. Moreover, Lemma \ref{lem:m0} also says that $A_m(P,Q)$ depends only on $(\omega_j)_{i-m\leq j<i}$ (observe the difference in indexing compared to Lemma \ref{lem:m0}). On the other hand, the cubes $P,Q$, as well as their dyadic children, and hence quantities like $\D_P f$ and eventually $T(P,Q)$, depend only on $(\omega_j)_{j\geq i}$. Thus
\begin{equation*}
  T(P,Q)\text{ and }A_m(P,Q)\text{ are independent.}
\end{equation*}
We can then manipulate
\begin{equation*}
\begin{split}
  \E &\sum_{\substack{P,Q\in\mathscr D_i \\ \frac12\eps\delta^{1-m} \ell(P) <d(x_P,x_Q) \\ \leq\frac12\eps\delta^{-m} \ell(P)}}T(P,Q) \\
   &=\sum_{\substack{P,Q\in\mathscr D_i \\ \frac12\eps\delta^{1-m} \ell(P) <d(x_P,x_Q) \\ \leq\frac12\eps\delta^{-m} \ell(P)}}\E T(P,Q)\frac{\E 1_{A_m(P,Q)}}{\P(A_m(P,Q))} \\
   &=\sum_{\substack{P,Q\in\mathscr D_i \\ \frac12\eps\delta^{1-m} \ell(P) <d(x_P,x_Q) \\ \leq\frac12\eps\delta^{-m} \ell(P)}}\E[T(P,Q) 1_{A_m(P,Q)}]\frac{1}{\P(A_m(P,Q))} \\
   &=\E\sum_{\substack{P,Q\in\mathscr D_i \\ \frac12\eps\delta^{1-m} \ell(P) <d(x_P,x_Q) \\ \leq\frac12\eps\delta^{-m} \ell(P)}}T(P,Q) 1_{A_m(P,Q)}\frac{1}{\P(A_m(P,Q))} \\
   &=:\E\sum_{\substack{P,Q\in\mathscr D_i \\ \frac12\eps\delta^{1-m} \ell(P) <d(x_P,x_Q) \\ \leq\frac12\eps\delta^{-m} \ell(P)}} T_m(P,Q).
\end{split}
\end{equation*}
Thus, at the small cost of multiplying the summands by numerical factors in the range $[1,2]$, we could insert the extra summation condition $A_m(P,Q)$ that $P,Q\in\mathscr D_i$ share a common ancestor $S\in\mathscr D_{i-m}$.

We can now reorganise the sums under these common ancestors. Thus, abbreviating the summation condition
\begin{equation*}
  \begin{cases}
     d(x_P,x_Q)\leq \frac12\eps\delta^{-m_0}, & \text{if }m=m_0, \\
    \frac12\eps\delta^{1-m}< d(x_P,x_Q)\leq \frac12\eps\delta^{-m}, & \text{if }m>m_0,
  \end{cases}
\end{equation*}
simply as $\sum^m$, we have
\begin{equation*}
\begin{split}
  \E\sum_{i=\sigma}^\tau\sum_{P,Q\in\mathscr D_i}T(P,Q)
  &=\E\sum_{m=m_0}^\infty\sum_{i=\sigma}^\tau \sum_{P,Q\in\mathscr D_i}^m \tilde T_m(P,Q) \\
  &=\E\sum_{m=m_0}^\infty\sum_{i=\sigma}^\tau \sum_{S\in\mathscr D_{i-m}}\sum_{\substack{P,Q\in\mathscr D_i \\ P,Q\subseteq S}}^m \tilde T_m(P,Q).
\end{split}
\end{equation*}
Let us investigate
\begin{equation*}
  A^m_S(f,g):=\sum_{\substack{P,Q\in\mathscr D_i \\ P,Q\subseteq S}}^m \tilde T_m(P,Q),
\end{equation*}
where $T_m(P,Q)$ is just $T(P,Q)$ multiplied by a number in $\{0\}\cup[1,2]$, and $T(P,Q)$ is as in \eqref{eq:TPQ}.

\subsection{Cancellation properties}
We see that both $f$ and $g$ are acted on by operators that annihilate constants, only observe these functions on $P,Q\subseteq S$. Thus, we may replace $f$ by $1_S(f-\ave{f}_S)$, and similarly with $g$. Moreover, in the case of $\D_P f$, we may replace $f$ by
\begin{equation*}
  \D^m_S f:=\sum_{\substack{S'\in\mathscr D \\ \ell(S')=\delta^m\ell(S)}}\D_{S'}f,
\end{equation*}
and in the case of $\ave{f}_P-\ave{f}_Q$, by
\begin{equation*}
  \E^m_S f:=\sum_{\substack{S'\in\mathscr D \\ \ell(S')=\delta^m\ell(S)}}\D_{S'}f,
\end{equation*}
as this operator only observed the averages of $f$ on the of the cubes $P$ and $Q$. Combining these observations, we can actually replace $f$, in the latter case, by
\begin{equation*}
  1_S(\E^m_S f-\ave{\E^m_S f}_S)=\sum_{\substack{S'\in\mathscr D \\ \ell(S')>\delta^m\ell(S)}}\D_{S'}f=:\D^{[0,m)}_S f.
\end{equation*}
Similar observations apply on the $g$ side. This describes the cancellation properties present in $A^m_S(f,g)$.

\subsection{Size properties}
Noting that $\ell(S)=\delta^{-m}\ell(P)$ and $z_P\in S$, we can simplify the denominator in \eqref{eq:TPQest} as
\begin{equation*}
  V(z_P,\delta^{-m}\ell(P))=V(z_P,\ell(S))\asymp \mu(S).
\end{equation*}
In the case of $\D_P$ and $\D_Q$ on both sides, we have
\begin{equation*}
  A^m_S(f,g)=\iint_S a^m_S(u,v)f(v)g(u)d\mu(v)d\mu(u),
\end{equation*}
where
\begin{equation*}
  a^m_S(u,v)=\sum_{\alpha,\beta}\sum_{\substack{P,Q\in\mathscr D_i \\ P,Q\subseteq S}}^m \chi^m_{\alpha,\beta}\pair{Th^\alpha_P}{h^\beta_Q}h^\alpha_P(v)h^\beta_Q(u)
\end{equation*}
satisfies
\begin{equation*}
\begin{split}
  \abs{a^m_S(u,v)}
  &\lesssim\sum_{\substack{P,Q\in\mathscr D_i \\ P,Q\subseteq S}}^m
  \omega(\delta^m)\frac{\sqrt{\mu(P)\mu(Q)}}{\mu(S)}\frac{1_P(v)}{\mu(P)^{1/2}}\frac{1_Q(u)}{\mu(Q)^{1/2}} \\
  &=\omega(\delta^m)\frac{1}{\mu(S)}\sum_{\substack{P,Q\in\mathscr D_i \\ P,Q\subseteq S}}^m 1_P(v)1_Q(u)
  \leq\omega(\delta^m)\frac{1}{\mu(S)} 1_{S\times S}(u,v).
\end{split}
\end{equation*}
In the case of $\ave{f}_P-\ave{f}_Q$, we get
\begin{equation*}
  a^m_S(u,v)=\sum_{\beta}\sum_{\substack{P,Q\in\mathscr D_i \\ P,Q\subseteq S}}^m \chi^m_{\alpha,\beta}\pair{T1_P}{h^\beta_Q}
  \Big(\frac{1_P(v)}{\mu(P)}-\frac{1_Q(v)}{\mu(Q)}\Big)h^\beta_Q(u),
\end{equation*}
which satisfies
\begin{equation*}
\begin{split}
  \abs{a^m_S(u,v)} &\lesssim
  \sum_{\substack{P,Q\in\mathscr D_i \\ P,Q\subseteq S}}^m 
  \omega(\delta^m)\frac{\mu(P)\sqrt{\mu(Q)}}{\mu(S)}\Big(\frac{1_P(v)}{\mu(P)}+\frac{1_Q(v)}{\mu(Q)}\Big)\frac{1_Q(u)}{\mu(Q)^{1/2}} \\
  &=\omega(\delta^m)\frac{1}{\mu(S)}\sum_{\substack{P,Q\in\mathscr D_i \\ P,Q\subseteq S}}^m
  \Big(1_P(v)1_Q(u)+\mu(P)\frac{1_Q(v)1_Q(u)}{\mu(Q)}\Big) \\
  &\leq\omega(\delta^m)\Big(\frac{1}{\mu(S)}1_{S\times S}(u,v)+\sum_{\substack{Q\subseteq S \\ \ell(Q)=\delta^m\ell(S)}}\frac{1_{Q\times Q}(u,v)}{\mu(Q)}\Big)
\end{split}
\end{equation*}
The case of $\ave{g}_Q-\ave{g}_P$ is of course symmetric.

Note that $1_{Q\times Q}(u,v)/\mu(Q)$ is the kernel of the averaging operator $\E_Q$.

Altogether, we find that $A^m_S$ takes one of the following forms:
\begin{equation*}
  A^m_S =\omega(\delta^m)\times\begin{cases} \D^m_S \dot A^m_S \D^m_S, \\ \D^{[0,m)}_S \dot A^m_S \D^m_S, \\ \D^m_S\dot A^m_S \D^{[0,m)}_S,
  \end{cases}
\end{equation*}
where
\begin{equation*}
  \Norm{\dot A^m_S f}{\mathcal{X}}\lesssim \E_S\Norm{f}{\mathcal{X}}+\E^m_S\Norm{f}{\mathcal{X}}.
\end{equation*}

\section{{Final estimates}}

We are left with estimating, e.g.,
\begin{equation}\label{eq:final0}
  \sum_{m=m_0}^\infty\omega(\delta^m) \sum_{i=\sigma}^\tau \pair{\D^{[0,m)}_{i-m}\dot A^m_{i-m} \D^m_{i-m} f}{g},
\end{equation}
where
\begin{equation*}
  G_{i}=\sum_{S\in\mathscr D_i}G_S,\quad G\in\{D^{[0,m)},\dot A^m,\D^m\},
\end{equation*}
as well as two similar sums with either the positions of $\D^{[0,m)}_S$ and $\D^m_S$ interchanged, or both replaced by $\D^m_S$.

Let us estimate the term that we wrote down above.

\begin{lemma}\label{lem:final1}
Let $1<p\leq s\leq q<\infty$ and suppose that $\mathcal{X}$ has martingale type $p$ and martingale cotype $q$. Then
\begin{equation*}
   \Babs{\sum_{i=\sigma}^\tau \pair{\D^{[0,m)}_{i-m}\dot A^m_{i-m} \D^m_{i-m} f}{g}}
   \lesssim n^{1/p-1/q}m^{1/p'}\Norm{f}{L_s(\mu;\mathcal{X})}\Norm{g}{L_s(\mu;\mathcal{X}^\prime)}.
\end{equation*}
\end{lemma}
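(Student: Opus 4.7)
My plan is to reduce the pairing via self-adjointness and then estimate with a double H\"older. When summed over $S\in\mathscr D_{i-m}$, the three operators simplify: $\D^m_{i-m}=\D_i$ is the full martingale difference at level $i$, and $\D^{[0,m)}_{i-m}=\E_i-\E_{i-m}$ is the $m$-step martingale increment. Since $\D^{[0,m)}_{i-m}$ is self-adjoint, I move it onto $g$ to obtain
\[\pair{\D^{[0,m)}_{i-m}\dot A^m_{i-m}\D^m_{i-m}f}{g}=\pair{\dot A^m_{i-m}\D_i f}{G_i},\qquad G_i:=(\E_i-\E_{i-m})g.\]
Applying pointwise $|\pair{\cdot}{\cdot}|\leq\int\Norm{\cdot}{\mathcal{X}}\Norm{\cdot}{\mathcal{X}^\prime}d\mu$, then H\"older with exponents $(q,q')$ in the $i$-sum and $(s,s')$ in the $\mu$-integral, reduces the task to bounding
\[\BNorm{\Big(\sum_i\Norm{\dot A^m_{i-m}\D_i f}{\mathcal{X}}^q\Big)^{1/q}}{L_s(\mu)}\cdot\BNorm{\Big(\sum_i\Norm{G_i}{\mathcal{X}^\prime}^{q'}\Big)^{1/q'}}{L_{s'}(\mu)}.\]

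The first factor is routine. By the standing pointwise bound on $\dot A^m_S$, I have $\Norm{\dot A^m_{i-m}\D_i f}{\mathcal{X}}\lesssim(\E_{i-m}+\E_i)\Norm{\D_i f}{\mathcal{X}}\leq 2M_{\mathscr D}\Norm{\D_i f}{\mathcal{X}}$. The vector-valued Fefferman--Stein inequality for the dyadic maximal function (available at exponents $1<q,s<\infty$ in doubling metric measure spaces) absorbs $M_{\mathscr D}$, after which martingale cotype $q$ of $\mathcal{X}$ produces the bound $\lesssim\Norm{f}{L_s(\mu;\mathcal{X})}$.

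The second factor is the main obstacle, since the crude triangle bound $\Norm{G_i}{\mathcal{X}^\prime}\leq\sum_{j=i-m}^{i-1}\Norm{\D_j g}{\mathcal{X}^\prime}$ wastes a factor of $m$, giving $m\Norm{g}{L_{s'}(\mathcal{X}^\prime)}$ rather than the target $m^{1/p'}\Norm{g}{L_{s'}(\mathcal{X}^\prime)}$. The recovery is by a subsequence trick: split $\{\sigma,\ldots,\tau\}$ into the $m$ arithmetic progressions $I_k:=\{i\equiv k\pmod m\}$, each of length $|I_k|\lesssim n/m$. Within each $I_k$, consecutive indices differ by exactly $m$, so $(G_i)_{i\in I_k}$ is \emph{literally} a martingale difference sequence for the sparse sub-filtration $(\mathcal F_i)_{i\in I_k}$. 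Martingale cotype $p'$ of $\mathcal{X}^\prime$---a consequence, via duality, of martingale type $p$ of $\mathcal{X}$---then gives
\[\BNorm{\Big(\sum_{i\in I_k}\Norm{G_i}{\mathcal{X}^\prime}^{p'}\Big)^{1/p'}}{L_{s'}(\mu)}\lesssim\Norm{g}{L_{s'}(\mu;\mathcal{X}^\prime)}.\]
Converting the inner $\ell^{p'}$-norm to $\ell^{q'}$ over the $|I_k|\lesssim n/m$ summands costs a factor $(n/m)^{1/q'-1/p'}$ (since $q'\leq p'$, as $p\leq q$), and the $m$ subsequences are reassembled into the single $\ell^{q'}$-sum via Minkowski's integral inequality $L_{s'}(\ell^{q'})\hookrightarrow\ell^{q'}(L_{s'})$ at a further cost of $m^{1/q'}$. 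Crucially, this last step requires $s'\geq q'$, which is precisely the hypothesis $s\leq q$. The arithmetic $m^{1/q'}\cdot(n/m)^{1/q'-1/p'}=m^{1/p'}n^{1/p-1/q}$ then yields the desired bound on the second factor, and combining with the first-factor estimate completes the proof.
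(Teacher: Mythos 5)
Your proof is correct and follows essentially the same route as the paper's: self-adjointness, H\"older with exponents $(q,q')$ in $i$ and $(s,s')$ in $\mu$, the vector-valued Doob/Fefferman--Stein maximal inequality plus martingale cotype $q$ for the $f$-factor, and the mod-$m$ subsequence trick combined with martingale cotype $p'$ of $\mathcal{X}'$ for the $g$-factor. The only (cosmetic) difference is the order of operations on the dual factor --- you apply cotype within each residue class and reassemble via $\ell^{p'}\to\ell^{q'}$ and Minkowski at cost $m^{1/q'}(n/m)^{1/q'-1/p'}$, while the paper first passes from $\ell^{q'}$ to $\ell^{s'}$ globally and then to $\ell^{p'}$ within classes --- and both orderings yield the same $n^{1/p-1/q}m^{1/p'}$.
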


\begin{proof}
By self-adjointness of the difference operators,
\begin{equation*}
\begin{split}
  \abs{\pair{\D^{[0,m)}_i\dot A^m_i \D^m_i f}{g}}
  &=\abs{\pair{\dot A^m_i \D^m_i f}{\D^{[0,m)}_i g}} \\
  &\leq\pair{(\E_i+\E_{i+m})\Norm{\D^m_i f}{\mathcal{X}}}{\Norm{\D^{[0,m)}_ig}{\mathcal{X}^\prime}}.
\end{split}
\end{equation*}
Hence
\begin{equation*}
\begin{split}
  \sum_{i=\sigma}^\tau & \abs{\pair{\D^{[0,m)}_{i-m}\dot A^m_{i-m} \D^m_{i-m} f}{g}} 
  =\sum_{i=\sigma-m}^{\tau-m} \abs{\pair{\D^{[0,m)}_{i}\dot A^m_{i} \D^m_{i} f}{g}}   \\
  &\leq\BNorm{\Big(\sum_{i=\sigma-m}^{\tau-m} \big[(\E_i+\E_{i+m})\Norm{\D^m_i f}{\mathcal{X}}]^q\Big)^{1/q}}{L_s(\mu)} \\
  &\qquad\times  \BNorm{\Big(\sum_{i=\sigma-m}^{\tau-m} \big[\Norm{\D^{[0,m)}_i g}{\mathcal{X}}]^{q'}\Big)^{1/q'}}{L_{s'}(\mu)}
\end{split}
\end{equation*}
for any $q,s\in(1,\infty)$. In the first term, we can pull out the conditional expectations (even after a crude estimate $(\E_i+\E_{i+m})\phi\leq\sup_{j\in\Z}\E_j\phi$ if we wish) by the vector-valued Doob maximal inequality \cite[Theorem 3.2.7]{HNVW1}:
\begin{equation*}
  \BNorm{\Big(\sum_{i=\sigma-m}^{\tau-m} \big[(\E_i+\E_{i+m})\Norm{\D^m_i f}{\mathcal{X}}]^q\Big)^{1/q}}{L_s(\mu)}
  \lesssim  \BNorm{\Big(\sum_{i=\sigma-m}^{\tau-m} \Norm{\D^m_i f}{\mathcal{X}}^q\Big)^{1/q}}{L_s(\mu)}.
\end{equation*}
If $\mathcal{X}$ has martingale cotype $q$, then
\begin{equation*}
   \BNorm{\Big(\sum_{i=\sigma-m}^{\tau-m} \Norm{\D^m_i f}{\mathcal{X}}^q\Big)^{1/q}}{L_s(\mu)}
   \lesssim\Norm{f}{L_s(\mu;\mathcal{X})},
\end{equation*}
essentially by definition, since $\D^m_i f$ are martingale differences.

The dual side is slightly trickier, since the compound differences $$\D^{[0,m)}_i=\sum_{k=0}^{m-1}\D_{i-k}$$ involve overlapping scales. For the optimal estimate, we need to be somewhat clever with the use of the triangle inequality. This is accomplished as follows: We now assume that $p\leq s\leq q$; hence $q'\leq s'\leq p'$, where $\mathcal{X}$ has martingale type $p$ and martingale cotype $q$. Then
\begin{equation*}
\begin{split}
  &\BNorm{\Big(\sum_{i=\sigma-m}^{\tau-m} \Norm{\D^{[0,m)}_i g}{\mathcal{X}}^{q'}\Big)^{1/q'}}{L_{s'}(\mu)} \\
  &\qquad\leq n^{1/q'-1/s'}\BNorm{\Big(\sum_{i=\sigma-m}^{\tau-m} \Norm{\D^{[0,m)}_i g}{\mathcal{X}^\prime}^{s'}\Big)^{1/s'}}{L_{s'}(\mu)} \\
  &\qquad=n^{1/q'-1/s'}\Big( \sum_{j=0}^{m-1}\BNorm{\Big(\sum_{\substack{i=\sigma-m \\ i\equiv j\mod m}}^{\tau-m} \Norm{\D^{[0,m)}_i g}{\mathcal{X}^\prime}^{s'}\Big)^{1/s'}}{L_{s'}(\mu)}^{s'}\Big)^{1/s'}.
\end{split}
\end{equation*}
The inner summation contains every $m$-th term among a sequence of length $n$. Recalling that $m\lesssim n$, the number of such terms is $O(n/m)$. (There could of course be a single term even if $m\gg n$, in which case the estimate would no longer be valid.) Thus, by H\"older's inequality, we can continue with
\begin{equation*}
  \lesssim  n^{1/q'-1/s'}(n/m)^{1/s'-1/p'}
  \Big( \sum_{j=0}^{m-1}\BNorm{\Big(\sum_{\substack{i=\sigma-m \\ i\equiv j\mod m}}^{\tau-m} \Norm{\D^{[0,m)}_i g}{\mathcal{X}^\prime}^{p'}\Big)^{1/p'}}{L_{s'}(\mu)}^{s'}\Big)^{1/s'}.
\end{equation*}
For each fixed $j$, the sequence $(\D^{[0,m)}_i )_{i\equiv j\mod m}$ consists of martingale differences, If $\mathcal{X}$ has martingale type $p$, its dual $\mathcal{X}^\prime$ has martingale cotype $p'$, and we can further continue the estimate
\begin{equation*}
\begin{split}
  &\lesssim n^{1/q'-1/s'}(n/m)^{1/s'-1/p'}
  \Big( \sum_{j=0}^{m-1}\Norm{g}{L_{s'}(\mu;\mathcal{X}^\prime)}^{s'}\Big)^{1/s'} \\
  &=n^{1/q'-1/s'}(n/m)^{1/s'-1/p'}(n/m)^{1/s'-1/p'}m^{1/s'}\Norm{g}{L_{s'}(\mu;\mathcal{X}^\prime)} \\
  &=n^{1/q'-1/p'}m^{1/p'}\Norm{g}{L_{s'}(\mu;\mathcal{X}^\prime)}=n^{1/p-1/q}m^{1/p'}\Norm{g}{L_{s'}(\mu;\mathcal{X}^\prime)}.
\end{split}
\end{equation*}
\end{proof}

By symmetry, we infer that
\begin{equation*}
     \Babs{\sum_{i=\sigma}^\tau \pair{\D^m_{i-m}\dot A^m_{i-m} \D^{[0,m)}_{i-m} f}{g}}
   \lesssim n^{1/p-1/q}m^{1/q}\Norm{f}{L_s(\mu;\mathcal{X})}\Norm{g}{L_s(\mu;\mathcal{X}^\prime)}.
\end{equation*}

In the case of $\D^m_{i-m}$ on both sides, the estimation is more straightforward, and we simply estimate
\begin{equation*}
\begin{split}
  &\BNorm{\Big(\sum_{i=\sigma-m}^{\tau-m} \Norm{\D^{m}_i g}{\mathcal{X}}^{q'}\Big)^{1/q'}}{L_{s'}(\mu)} \\
  &\qquad\leq n^{1/q'-1/p'}\BNorm{\Big(\sum_{i=\sigma-m}^{\tau-m} \Norm{\D^{,}_i g}{\mathcal{X}^\prime}^{p'}\Big)^{1/p'}}{L_{s'}(\mu)}
  \lesssim n^{1/q'-1/p'}\Norm{g}{L_{s'}(\mu;\mathcal{X}^\prime)}
\end{split}
\end{equation*}
and hence
\begin{equation*}
   \Babs{\sum_{i=\sigma}^\tau \pair{\D^m_{i-m}\dot A^m_{i-m} \D^{m}_{i-m} f}{g}}
   \lesssim n^{1/p-1/q}\Norm{f}{L_s(\mu;\mathcal{X})}\Norm{g}{L_s(\mu;\mathcal{X}^\prime)}.
\end{equation*}
The largest of the different bounds that we have obtained for the $m$-th term is hence
\begin{equation*}
  n^{1/p-1/q}m^{\max(1/p',1/q)}\Norm{f}{L_s(\mu;\mathcal{X})}\Norm{g}{L_s(\mu;\mathcal{X}^\prime)}.
\end{equation*}
Recalling from \eqref{eq:final0} that we still need to sum over $m$, we finally obtain the bound
\begin{equation*}
\begin{split}
  n^{1/p-1/q}\sum_{m=m_0}^\infty \omega(\delta^m)m^{\max(1/p,1/q')}
  &\asymp n^{1/p-1/q}\int_0^1 \omega(t)(1+\log\frac1t)^{\max(1/p,1/q')}\frac{dt}{t} \\
  &=:n^{1/p-1/q}\Norm{\omega}{\operatorname{Dini}^{\max(1/p,1/q')}}.
\end{split}
\end{equation*}

\section{{Concluding remarks}}

We have now concluded the (unavoidably somewhat lengthy) proof of Theorem \ref{thm:main}. In the course of the proof, after some preliminary reductions, the main part of the operator was decomposed into two paraproducts and the cancellative part. Recall that we assume that our target Banach space has martingale type $p$ and martingale cotype $q$.

For the paraproduct, we found the estimate $O(n^{1/p-1/2})$. By symmetry, the dual paraproduct then satisfies the bound $O(n^{1/q'-1/2})=O(n^{1/2-1/q})$. Finally, the cancellative part of the operator had the bound $O(n^{1/p-1/q})$, provided that the kernel satisfies a Dini condition of order $\max(1/p,1/q')$.

It might at first seem counterintuitive that the order of the required Dini condition increases with improving martingale type or cotype. On the other hand, the dependence on the finiteness parameter $n$ of the kernel decreases at the same time. A way to think of this is as follows: If the space is better, then we are able to make better use of the properties of the kernel, whereas a poor space only observes a Dini condition of low order, even if more would be available.

If (and only if) $\mathcal{X}$ is (isomorphic to) a Hilbert space, then one can take $p=q=2$, and all bounds become $O(1)$ in terms of $n$, allowing one to dispense with the truncations and deal with genuine singular integrals. In this case, one needs a Dini condition of order $1/2$ to run the argument. It seems to be open whether this can be relaxed, even in the scalar case.

In fact, with the present argument, the only way to get $O(1)$ in terms of $n$ in the cancellative term is to have $p=q=2$. This is due to the fact that necessarily $p\leq 2$ and $q\geq 2$; hence $1/p-1/q=(1/p-1/2)+(1/2-1/q)$ can only be zero if both terms vanish. On the other hand, for the paraproducts, it suffices to have just one of $p$ or $q$ equal to $2$, which provides a much larger class of examples.

The boundedness of the dyadic paraproduct $\Pi_b$ with a scalar-valued symbol $b\in\operatorname{BMO}$, is well known on $L_s(\mu;\mathcal{X})$, in the case that $\mathcal{X}$ is a UMD space. The result is attributed to Bourgain, and written down in \cite{FigWoj}. Our considerations in Section \ref{sec:parap} show that they are also bounded on $L_s(\mu;\mathcal{X})$, if $\mathcal{X}$ has martingale type $2$. These two classes (UMD and martingale type $2$) are not mutually comparable. This raises the interesting question about the maximal class of spaces $\mathcal{X}$ such that the dyadic paraproduct induces a bounded operator on $L_s(\mu;\mathcal{X})$.

{\bf Acknowledgments.} I would like to thank the anonymous referees for their constructive comments on the manuscript.


\end{document}